\documentclass[12pt,reqno]{amsproc}
\usepackage{times}

\textwidth=465pt \evensidemargin=0pt \oddsidemargin=0pt
\marginparsep=8pt \marginparpush=8pt \textheight=650pt
\topmargin=-25pt

\setlength{\parskip}{2pt}

\usepackage{amsmath,amsthm,amssymb,wasysym,mathtools}
\usepackage[inline]{enumitem}
\usepackage{caption}
\usepackage{subcaption}
\usepackage{graphicx}

\DeclareMathOperator{\RE}{Re}

\numberwithin{equation}{section}

\usepackage{amsthm}

\newtheoremstyle{fancytheorem}               
{.5\baselineskip±.2\baselineskip}           
{.5\baselineskip±.2\baselineskip}           
{\itshape\addtolength{\leftskip}{0mm}\setlength{\parindent}{0em}}      
{0mm}                                     
{\bfseries}                                 
{}                                  
{.5em}                                      
{\thmname{#1}\thmnumber{ #2}. \thmnote{[#3]}}

\newtheoremstyle{fancydefinition}               
{.5\baselineskip±.2\baselineskip}           
{.5\baselineskip±.2\baselineskip}           
{\upshape\addtolength{\leftskip}{0mm}\setlength{\parindent}{0em}}      
{0mm}                                     
{\bfseries}                                 
{}                                  
{.5em}                                      
{\thmname{#1}\thmnumber{ #2}. \thmnote{[#3]}}

\theoremstyle{fancytheorem}

\newtheorem{theorem}{Theorem}[section]
\newtheorem{lemma}[theorem]{Lemma}

\theoremstyle{fancydefinition}

\allowdisplaybreaks
\usepackage{xcolor} 

\begin{document}
	
	\title[Product of starlike  functions with non-vanishing  polynomials]{Starlikeness of a product of starlike \\ functions with non-vanishing  polynomials}

	\author[S. Malik]{Somya Malik}
	\address{Department of Mathematics \\National Institute of Technology\\Tiruchirappalli-620015,  India }
	\email{arya.somya@gmail.com}
	
	\author[V. Ravichandran]{V. Ravichandran} 
	\address{Department of Mathematics \\National Institute of Technology\\Tiruchirappalli-620015,  India }
	\email{vravi68@gmail.com; ravic@nitt.edu}
	
	\begin{abstract}
		For a function $f$  starlike of order $\alpha$, $0\leqslant \alpha <1$,   a non-constant polynomial $Q$ of degree $n$ which is non-vanishing in the unit disc $\mathbb{D}$ and $\beta>0$, we consider the function $F:\mathbb{D}\to\mathbb{C}$ defined by $F(z)=f(z) (Q(z))^{\beta /n}$ and  find the largest value of $r\in (0,1]$ such that $r^{-1} F(rz)$ lies in various known subclasses of starlike functions such as the class of starlike functions of order $\lambda$, the classes of starlike functions associated with the exponential function, cardioid, a rational function, nephroid domain and modified sigmoid function. Our radii results are sharp.  We also discuss the correlation with known radii results as special cases.
	\end{abstract}

	\subjclass[2020]{30C80,  30C45}
	\keywords{Univalence; starlikeness; convexity; polynomials; subordination; radius problem}
	
	\thanks{The first author is supported by the UGC-JRF Scholarship.}
	
	\maketitle
	
	\section{Introduction}We consider the class $\mathcal{A}$ of all analytic functions defined on the unit disc $\mathbb{D}:=\{z: |z|<1\}$ and normalised by the condition $f(0)=f'(0)-1=0$ as well as its subclass $\mathcal{S}$ consisting of all univalent functions. For any two subclasses $\mathcal{F}$ and $\mathcal{G}$ of $\mathcal{A}$, the $\mathcal{G}$-radius of the class $\mathcal{F}$, denoted as $R_{\mathcal{G}} (\mathcal{F})$ is the largest number $R_{\mathcal{G}} \in (0,1)$ such that $r^{-1}f(rz)\in \mathcal{G}$ for all $f\in \mathcal{F}$ and $0<r<R_{\mathcal{G}}$. In 1969, Ba\c{s}g\"{o}ze \cite{Bas} studied the radii of starlikeness and convexity  for polynomial functions which are non-vanishing in the unit disc $\mathbb{D}$. This study was motivated by the work of Alexander \cite{Alex} who showed that the radius of starlikeness and hence the radius of univalence for the function $f$ defined by $f(z)=zP(z)$, where $P$ is a polynomial of degree $n>0$ with zeros outside the unit disc, is $(n+1)^{-1}$. Further, Ba\c{s}g\"{o}ze \cite{Bas2} also studied the radius results related to $\alpha$-spirallike and $\alpha$- convex spirallike functions of order $\lambda$ for various kinds of functions obtained from polynomials, such as $zP(z),P(z)^{\beta/n}, z(P(z))^{\beta/n}, zM(z)/N(z)$ where $P,M,\ \text{and}\ N$ are polynomials with zeros outside the unit disc. In 2000,   Gangadharan et al.\ \cite{Ganga2}  (see also Kwon and Owa \cite{KwanOwa}) determined the radius of $p$-valently strongly starlikeness of order $\gamma$ for the function $F:\mathbb{D}\to\mathbb{C}$ defined by  $F(z):=f(z) (Q(z))^{\beta/n}$, where $f$ is a $p$-valent analytic function, $Q$ has properties similar to that of the polynomials considered in the paper by Ba\c{s}g\"{o}ze, and $\beta$ is a positive real number. The present study continues this investigation to several recently studied subclasses of starlike functions defined by subordination.
	
	For two analytic functions $f$ and $g$, $f$ is said to be subordinate to $g$, denoted by $f\prec g$, if $f=g\circ w$ for some  analytic function $w:\mathbb{D}\rightarrow \mathbb{D}$, with $w(0)=0$. When the function $g$ is univalent, the subordination $f\prec g$ holds if and only if $f(0)=g(0)$ and $f(\mathbb{D})\subseteq g(\mathbb{D})$. Several subclasses of the class $\mathcal{A}$ are defined using the concept of subordination. For an analytic function $\varphi:\mathbb{D}\to\mathbb{C}$, the class $\mathcal{S}^{*}(\varphi)$  consists of all  functions $f\in \mathcal{A}$ satisfying the subordination \[\dfrac{zf'(z)}{f(z)}\prec \varphi(z).\]
	Shanmugam \cite{Shan}   studied convolution theorems for  more general classes but with stronger assumption of convexity of $\varphi$. 	Ma and Minda \cite{MaMinda} later gave a unified treatment of growth, distortion, rotation and coefficient inequalities for the class $\mathcal{S}^{*}(\varphi)$ when the superordinate function $\varphi$ is a function with positive real part, $\varphi(\mathbb{D})$ is symmetric with respect to the real axis and starlike with respect to $\varphi(0)=1$.  	The class of starlike functions of order $\alpha$, $\mathcal{ST}(\alpha)$ is a special case when $\varphi(z)=(1+(1-2\alpha)z)/(1-z)$; for $\alpha=0$, the usual class of starlike functions is obtained. Other subclasses of the class of starlike functions can also be derived for different choices of $\varphi$. For $\varphi_{par}(z)=1+(2/\pi^2) (\log ((1+\sqrt{z})/(1-\sqrt{z})) )^2$, the class $\mathcal{S}_{p}=\mathcal{S}^{*}(\varphi_{par})$ is the class of starlike functions associated with a parabola, the image of the unit disc under the function $\varphi_{par}$ is the set $\varphi_{par}(\mathbb{D})=\{w=u+\iota v: v^2<2u-1\}=\{w: |w-1|<\RE w\}$. This class was introduced by  R\o nning \cite{RonPara}. Mendiratta et al. \cite{Exp} investigated the class of starlike functions associated with the exponential function $\mathcal{S}^{*}_\mathit{e}=\mathcal{S}^{*}(\mathit{e}^z)$. Similarly, various properties of the class of starlike functions associated with a cardioid, $\mathcal{S}^{*}_{c}=\mathcal{S}^{*}(\varphi_{c}(z))$ for $\varphi_{c}(z)=1+(4/3)z+(2/3)z^2$ are studied by Sharma et al.\cite{KanNavRavi}. A function $f\in \mathcal{S}^{*}_{c}$ if $zf'(z)/f(z)$ lies in the region bounded by the cardioid $\{x+\iota y: (9x^2+9y^2-18x+5)^2 -16(9x^2+9y^2-6x+1)=0\}$. Kumar and Ravichandran \cite{KumarRavi} discussed the class $\mathcal{S}^{*}_{R}=\mathcal{S}^{*}(\psi(z))$ of starlike functions associated with a rational function for $\psi(z)=1+((z^2+kz)/(k^2-kz)),\ k=\sqrt{2}+1$. In 2020, Wani and Swaminathan \cite{LatSwami} studied the class of starlike functions associated with a nephroid domain, $\mathcal{S}^{*}_{Ne}=\mathcal{S}^{*}(\varphi_{Ne})$ with $\varphi_{Ne}(z)=1+z-z^3/3$. Goel and Kumar \cite{PriyankaSivaSig} explored various properties of the class $\mathcal{S}^{*}_{SG}=\mathcal{S}^{*}(2/(1+\mathit{e}^{-z}))$ known as the class of starlike functions associated with modified sigmoid function. Radius problems relating to the ratio of analytic functions are recently considered in \cite{LeckoRaviAsha,Madhuravi}. In 2020, Wani and Swaminathan \cite{LatSwami2} discussed the radius problems for the functions associated with the nephroid domain. Cho and others have also investigated some interesting radius problems, see \cite{ChoVirendSushRavi,EbadianCho}.

	For a given   function $f$   starlike  of order $\alpha$, $Q$  a  polynomial of degree $n$  whose zeros are outside the unit disk $\mathbb{D}$ (in other words, $Q$ is non-vanishing in $\mathbb{D}$) and $\beta$  a positive real number, we determine the $\mathcal{M}$-radius of the function  $F:\mathbb{D}\to\mathbb{C}$ defined by  $F(z):=f(z) (Q(z))^{\beta/n}$ for various choices of the class $\mathcal{M}$.  In Section 2, we determine  the radius of starlikeness of order $\lambda$ for the function $F$, and obtain, as a special case,   the radius of starlikeness for $F$. This is done by studying a mapping property of the function $w:\mathbb{D}\to\mathbb{C}$ defined by $w(z):=zF'(z)/F(z)$. Indeed,   we find the smallest disc containing $w(\overline{\mathbb{D}}_r)$   where $\mathbb{D}_r:=\{z\in \mathbb{C}:|z|<r\}$.  This disc is used in the investigation of all the radius problems.  In Sections 3-4, we respectively compute the values of the radius of starlikeness associated with the exponential function and the radius of starlikeness associated with a cardioid for the function $F$. In Sections 5-7, we determine the radius of starlikeness associated with a particular rational function, the radius of starlikeness associated with nephroid domain  and the radius of starlikeness associated with modified sigmoid function for the function $F$. All the radii obtained are shown to be sharp. Several known radii results are obtained as special cases for specific values of $\alpha$ and $\beta$.
	
	\section{Starlike functions of order $\lambda$}
	For $0\leqslant \lambda <1$, the class  $\mathcal{ST}(\lambda)$ of starlike functions of order $\lambda$ contains all functions $f\in \mathcal{A}$, satisfying $\RE (zf'(z)/f(z))>\lambda$. Let $f\in \mathcal{ST}(\alpha)$ and $Q$ be a polynomial of degree $n>0$ with all of its zeros in the region $\mathbb{C}\backslash\mathbb{D}$. Since $Q$ is non-vanishing in $\mathbb{D}$, the function $F:\mathbb{D}\rightarrow \mathbb{C}$   defined by
	\begin{equation}\label{eqn2.1}
		F(z):=f(z) (Q(z))^{\frac{\beta}{n}},\quad \beta>0
	\end{equation}is analytic in $\mathbb{D}$. We assume $Q(0)=1$ throughout this paper so that $F\in\mathcal{A}$.
	The following theorem gives the $\mathcal{ST}(\lambda)$-radius for the function $F$ and it is independent of the degree of the polynomial $Q$.

	\begin{theorem}\label{thm2.1}If the function $f\in \mathcal{ST}(\alpha)$,  then the radius of starlikeness of order $\lambda$ for the function $F$ defined in \eqref{eqn2.1} is given by \[R_{\mathcal{ST}(\lambda)} (F)=\dfrac{2(1-\lambda)}{2-2\alpha+\beta+\sqrt{(2-2\alpha+\beta)^2+4(1-\lambda)(2\alpha+\beta-1-\lambda)}}.\]
	\end{theorem}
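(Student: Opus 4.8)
The plan is to study the function $w(z):=zF'(z)/F(z)$ and to show that $w(\overline{\mathbb{D}}_r)$ is contained in an explicit disc; starlikeness of order $\lambda$ for $r^{-1}F(rz)$ is then equivalent to this disc lying in the half-plane $\{\zeta:\RE\zeta>\lambda\}$. Taking the logarithmic derivative of \eqref{eqn2.1} gives the splitting
\[ w(z)=\frac{zf'(z)}{f(z)}+\frac{\beta}{n}\,\frac{zQ'(z)}{Q(z)}, \]
so it suffices to bound each summand separately on the circle $|z|=r$ and add the resulting discs, using that the Minkowski sum of two discs is again a disc with added centres and added radii.

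For the first summand I would invoke the classical order-$\alpha$ estimate: since $p:=zf'/f$ satisfies $(p-\alpha)/(1-\alpha)\in\mathcal{P}$, where $\mathcal{P}$ is the Carath\'eodory class of functions with positive real part and value $1$ at the origin, the standard bound $\abs{p_0(z)-(1+r^2)/(1-r^2)}\le 2r/(1-r^2)$ for $p_0\in\mathcal{P}$ yields that $zf'(z)/f(z)$ lies in the disc centred at $(1+(1-2\alpha)r^2)/(1-r^2)$ with radius $2(1-\alpha)r/(1-r^2)$. For the second summand, write $Q(z)=\prod_{k=1}^n(1-z/z_k)$ with $\abs{z_k}\ge 1$; then each term $-(z/z_k)/(1-z/z_k)$ is the image of a point of $\overline{\mathbb{D}}_r$ under the M\"obius map $u\mapsto -u/(1-u)$, hence lies in the disc centred at $-r^2/(1-r^2)$ of radius $r/(1-r^2)$. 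Summing the $n$ terms and multiplying by $\beta/n$ produces a disc centred at $-\beta r^2/(1-r^2)$ of radius $\beta r/(1-r^2)$; the factor $n$ cancels, which is precisely why the final radius is independent of $\deg Q$.

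Adding the two discs shows that $w(\overline{\mathbb{D}}_r)$ is contained in the disc with real centre $C(r)=(1+(1-2\alpha-\beta)r^2)/(1-r^2)$ and radius $\rho(r)=(2-2\alpha+\beta)r/(1-r^2)$. Because the centre is real, $\RE w(z)>\lambda$ throughout $\mathbb{D}_r$ exactly when $C(r)-\rho(r)\ge\lambda$, which after clearing the positive denominator $1-r^2$ becomes the quadratic inequality
\[ (1-2\alpha-\beta+\lambda)\,r^2-(2-2\alpha+\beta)\,r+(1-\lambda)\ge 0. \]
Since the left-hand side equals $1-\lambda>0$ at $r=0$, the sought radius is its smallest positive root; writing this root in the rationalised form $2c/(-b+\sqrt{b^2-4ac})$ with $a=1-2\alpha-\beta+\lambda$, $b=-(2-2\alpha+\beta)$, $c=1-\lambda$, and using $-4ac=4(1-\lambda)(2\alpha+\beta-1-\lambda)$, reproduces the stated expression for $R_{\mathcal{ST}(\lambda)}(F)$ after a short check that this root lies in $(0,1]$.

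Finally, for sharpness I would exhibit extremal data making every inclusion above an equality on the negative real axis: take $f(z)=z/(1-z)^{2(1-\alpha)}$, the Koebe function of order $\alpha$, and $Q(z)=(1+z)^n$, whose zero lies on $\partial\mathbb{D}$. A direct computation at $z=-r$ gives
\[ \RE w(-r)=\frac{1-(1-2\alpha)r}{1+r}-\frac{\beta r}{1-r}=\frac{1-(2-2\alpha+\beta)r+(1-2\alpha-\beta)r^2}{1-r^2}=C(r)-\rho(r), \]
so $\RE w(-R)=\lambda$ and the bound cannot be improved. The main obstacle is the second disc estimate: one must verify both that each factor of $Q$ contributes the same disc and that the $\beta/n$ scaling cancels the number of factors, since this $n$-independence is the crux of the theorem; the remaining work, namely the root selection and the sharpness computation, is routine algebra.
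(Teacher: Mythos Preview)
Your proposal is correct and follows essentially the same route as the paper: the same logarithmic-derivative splitting, the same Carath\'eodory disc for $zf'/f$, the same M\"obius estimate for each factor $z/(z-z_k)$ of $zQ'/Q$ (with the $\beta/n$ scaling cancelling $n$), the triangle inequality combined into the disc $\mathbb{D}(C(r);\rho(r))$, the resulting quadratic in $r$, and the identical extremal pair $f(z)=z(1-z)^{2\alpha-2}$, $Q(z)=(1+z)^n$ evaluated at $z=-\sigma_0$ for sharpness. The only wording to tighten is the phrase ``exactly when $C(r)-\rho(r)\ge\lambda$'': the disc containment gives only the sufficiency direction, and it is your separate sharpness computation that supplies necessity---so state the sufficient bound first and let the extremal example close the equivalence, as the paper does.
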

	\begin{proof}
		We start with finding the disc in which $zF'(z)/F(z)$ lies for $z\in \overline{\mathbb{D}}_{r}$, then using this disc, we determine the radius of starlikeness of order $\lambda$ for $F$.
		
		For the function $F$ given by \eqref{eqn2.1}, a calculation shows that
		\begin{equation}\label{eqn2.2}
			\dfrac{zF'(z)}{F(z)}=\dfrac{zf'(z)}{f(z)}+\dfrac{\beta}{n}\dfrac{zQ'(z)}{Q(z)}.
		\end{equation}
		Since $f\in \mathcal{ST}(\alpha)$, it is well-known that $zf'(z)/f(z)$ has positive real part and so
		\begin{equation}\label{eqn2.3a}
			\left| \frac{zf'(z)}{f(z)}-\dfrac{1+(1-2\alpha)r^2}{1-r^2}\right| \leqslant \dfrac{2(1-\alpha)r}{1-r^2},\quad |z|\leqslant r.
		\end{equation}
		or equivalently
		$zf'(z)/f(z)$ lies in the disc $\mathbb{D} (a_f (r); c_f(r))$ for $|z|\leqslant r$ where
		\begin{equation}\label{eqn2.3}
			a_f(r)=\dfrac{1+(1-2\alpha)r^2}{1-r^2},\quad c_f(r)=\dfrac{2(1-\alpha)r}{1-r^2}.
		\end{equation}
		Let $z_{k},\ k=1,2,\ldots ,n$ denote the zeros of the polynomial $Q$, then the polynomial $Q$ is a constant multiple of $\prod_{k=1}^n (z-z_k)$ and so
		\begin{equation}\label{eqn2.4}
			\dfrac{zQ'(z)}{Q(z)}=\sum_{k=1}^{n}\dfrac{z}{z-z_{k}}.
		\end{equation}
		Since $z_{k}\in \mathbb{C}\backslash \mathbb{D}$ for every $k$, the bilinear transformation $z/(z-z_k)$ maps $\overline{\mathbb{D}}_r$ to a disc. Indeed,    \cite[Lemma~3.2]{GangaRaviShan} shows that \begin{equation*}
			\left|\dfrac{z}{z-z_{k}}+\dfrac{r^2}{1-r^2}\right|\leqslant \dfrac{r}{1-r^2},\quad |z|\leqslant r
		\end{equation*}  for every  $k$ and hence, using \eqref{eqn2.4}, we have
		\begin{equation}\label{eqn2.5}
			\left|\dfrac{zQ'(z)}{Q(z)}+\dfrac{nr^2}{1-r^2}\right|\leqslant \dfrac{ nr}{1-r^2},\quad |z|\leqslant r.
		\end{equation}
		Using \eqref{eqn2.2}, we get
		\begin{align}
			\left|\dfrac{zF'(z)}{F(z)}-\dfrac{1-(2\alpha -1+\beta)r^2}{1-r^2}\right| &=\nonumber \left|\dfrac{zf'(z)}{f(z)}-\dfrac{1+(1-2\alpha )r^2}{1-r^2}+\dfrac{\beta}{n}\dfrac{zQ'(z)}{Q(z)}+\dfrac{\beta r^2}{1-r^2}\right|\\
			&\leqslant \left|\dfrac{zf'(z)}{f(z)}-\dfrac{1+(1-2\alpha )r^2}{1-r^2}\right|+\left|\dfrac{\beta}{n}\dfrac{zQ'(z)}{Q(z)}+\dfrac{\beta r^2}{1-r^2}\right|,\label{eqn2.6}
		\end{align}
		for $|z|\leqslant r$.
		Since $\beta\in \mathbb{R}$ is positive, using the equations \eqref{eqn2.3a} and \eqref{eqn2.5}, the inequality   \eqref{eqn2.6} gives
		\begin{equation}\label{eqn2.7}
			\left|\dfrac{zF'(z)}{F(z)}-\dfrac{1-(2\alpha -1+\beta)r^2}{1-r^2}\right| \leqslant \dfrac{(2-2\alpha+\beta)r}{1-r^2},\quad  |z|\leqslant r.
		\end{equation}
		Define the functions $a_F $ and $c_F$ by
		\[ a_F(r):= \dfrac{1-(2\alpha -1+\beta)r^2}{1-r^2}  \quad \text{and}\quad c_F(r):= \dfrac{(2-2\alpha+\beta)r}{1-r^2}. \]
		so that $ zF'(z)/F(z)\in  \mathbb{D}(a_F (r);c_F(r))$.
		It is observed that the center $a_{F}$ is an increasing function of $r$ for $2\alpha+\beta-2<0$, and is a decreasing function of $r$ for $2\alpha+\beta-2 \geqslant 0$.

		From \eqref{eqn2.7}, it follows that
		\begin{align}
			\RE \dfrac{zF'(z)}{F(z)} &\geqslant \notag a_{F}(r)-c_{F}(r)\\
			&= \notag \dfrac{1-(2\alpha -1+\beta)r^2}{1-r^2}-\dfrac{(2-2\alpha+\beta)r}{1-r^2}\\
			&=\dfrac{1-(1-2\alpha)r}{1+r}-\dfrac{\beta r}{1-r}=: \psi(r) . \label{eqn2.10}
		\end{align}
		The equation
		$\psi(\alpha, \beta, r)=\lambda$ is simplifies to
		\begin{equation}\label{eqn2.9}
			(1-2\alpha-\beta+\lambda)r^2-(2-2\alpha+\beta)r+1-\lambda=0
		\end{equation} and so the smallest positive root of the equation $\psi(\alpha, \beta, r)=\lambda$
		in the interval $(0,1)$ is given by  \[\sigma_{0}:=\dfrac{2(1-\lambda)}{2-2\alpha+\beta+\sqrt{(2-2\alpha+\beta)^2+4(1-\lambda)(2\alpha+\beta-1-\lambda)}}.\]
		It can be seen that \
		\begin{align}
			\psi '(r)&=\nonumber - \dfrac{2(1-\alpha)(1-r)^2+\beta(1+r)^2}{(1-r^2)^2}<0,
		\end{align}
		which shows that $\psi$ is a decreasing function of $r\in (0,1)$. Therefore,
		for $r<\sigma_{0}$, we have $		\psi(\alpha,\beta,r)>\psi(\alpha,\beta,\sigma_{0})=\lambda$ and so \eqref{eqn2.10} implies that $ \RE (zF'(z)/F(z))\geqslant \psi(\alpha,\beta,r)>\lambda$ for all $r< \sigma_{0}$, or in other words, the  radius of starlikeness of order $\lambda$  of the function $F$ is at least $\sigma_{0}$.
		
		To show that the radius obtained is the best possible, take $f(z)=z(1-z)^{2\alpha-2}\in \mathcal{ST}(\alpha)$ and the polynomial $Q(z)=(1+z)^{n}$. For these choices of the functions $f$ and $Q$, we have
		\begin{align}
			F(z)&= \notag z(1-z)^{2\alpha-2} (1+z)^{\beta},\\
			\intertext{which implies}
			\dfrac{zF'(z)}{F(z)}&=\notag 1+\dfrac{(2-2\alpha)z}{1-z}+\dfrac{\beta z}{1+z}\\
			&=\notag \dfrac{(1-2\alpha-\beta)z^2+(2-2\alpha+\beta)z+1}{1-z^2}\\
			&=\lambda+ \dfrac{(1-2\alpha-\beta+\lambda)z^2+(2-2\alpha+\beta)z+1-\lambda}{1-z^2}.\label{eqn2.11}
		\end{align}
		Using the fact that $\sigma_{0}$ is the positive root of the polynomial in \eqref{eqn2.9}, the equation \eqref{eqn2.11} shows that  $\RE (zF'(z)/F(z))=\lambda$ for $z=-\sigma_{0}$ proving sharpness of $\sigma_0$.
	\end{proof}

	For $\lambda =0$, the radius of starlikeness for the function $F$ given by \eqref{eqn2.1} is  \[R_{\mathcal{ST}}(F)=\dfrac{2}{2-2\alpha+\beta+\sqrt{(2-2\alpha+\beta)^2+4(2\alpha+\beta-1)}}.\]
	When $\alpha$ goes to $1$ in Theorem~\ref{thm2.1} we get that the radius of starlikeness of order $\lambda$ for the function $F(z)=z(Q(z))^{\beta/n}$, where $Q$ is a non-constant polynomial of degree $n$ non-vanishing on the unit disc and $\beta >0$, comes out to be $(1-\lambda)/(\beta+1-\lambda)$. This result for $\beta=n$ coincides with the one obtained by Ba\c{s}g\"{o}ze in \cite[Theorem~3]{Bas}.	Moreover,  by letting  $\beta\to 0$  in Theorem~\ref{thm2.1},  we obtain  the radius of starlikeness of order $\lambda$ for the class of starlike functions of order $\alpha, 0\leqslant \alpha <1$ (see \cite[p.~88]{Goodman}).
	
	\section{Starlike functions associated with the exponential function}
	The class $\mathcal{S}^{*}_{\mathit{e}}$ of starlike functions associated with the exponential functions consists of all the functions $f\in \mathcal{A}$ which satisfy  $ zf'(z)/f(z)\prec \mathit{e}^z$. This class was introduced and studied  by Mendiratta et al. \cite{Exp} in 2015. The subordination definition is equivalent to the inequality $|\log (zf'(z)/f(z))| <1$.
	
	The main result of the section provides the $\mathcal{S}^{*}_{\mathit{e}}$ radius for the function $F$ given by \eqref{eqn2.1}.
	\begin{lemma}\cite[Lemma~2.2]{Exp}\label{lem4.1}
		For $1/\mathit{e}<a<\mathit{e}$, let $r_a$ be given by
		\begin{align*}
			r_a &=\begin{dcases}
				a-\frac{1}{\mathit{e}} &\text{ if }\ \frac{1}{\mathit{e}}< a \leqslant \frac{\mathit{e}+\mathit{e}^{-1}}{2}\\
				\mathit{e}-a &\text{ if }\ \frac{\mathit{e}+\mathit{e}^{-1}}{2} \leqslant a <\mathit{e}.
			\end{dcases}
		\end{align*}
		Then,  $\{w: |w-a|<r_a\} \subset \{w: |\log w|<1\}= \Omega_{\mathit{e}}$, where $\Omega_{\mathit{e}}$ is the image of the unit disc $\mathbb{D}$ under the exponential function.
	\end{lemma}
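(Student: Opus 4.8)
The plan is to read the lemma as a statement about Euclidean distance: the open disc $\{w:|w-a|<r_a\}$ lies in $\Omega_{e}=\{w:|\log w|<1\}$ precisely when $r_a$ does not exceed the distance from the real center $a$ to the boundary curve $\partial\Omega_{e}$. Since $\partial\Omega_{e}$ is the image of the unit circle under the exponential map, I would parametrize it as $\gamma(\theta)=\exp(e^{i\theta})$, $\theta\in[0,2\pi)$, and record that the two real-axis boundary points are $\gamma(0)=e$ and $\gamma(\pi)=1/e$. The lemma then reduces to showing that the distance from $a$ to $\partial\Omega_{e}$ is exactly $\min(e-a,\,a-1/e)$, attained at the nearer of these two vertices; the case split in the definition of $r_a$ at $a=(e+e^{-1})/2$ is just the value at which the two vertex distances coincide.

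Concretely, I would study the squared distance
\[
D(\theta)=|\gamma(\theta)-a|^2=e^{2\cos\theta}-2a\,e^{\cos\theta}\cos(\sin\theta)+a^2 .
\]
Because $\Omega_{e}$ is symmetric about the real axis and $a$ is real, it suffices to minimize $D$ over $\theta\in[0,\pi]$. A direct evaluation gives the endpoint values $D(0)=(e-a)^2$ and $D(\pi)=(a-1/e)^2$, so $\min(D(0),D(\pi))=r_a^2$ with the dividing case at the midpoint $a=(e+e^{-1})/2$. The converse (upper) bound comes essentially for free: every $w$ in the closed disc has modulus in $[a-r_a,a+r_a]\subseteq[1/e,e]$, and since $|\log w|<1$ forces $1/e<|w|<e$, no larger disc centered at $a$ could fit; the modulus alone caps the admissible radius at $\min(e-a,a-1/e)$. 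The real content is therefore the sufficiency inequality $D(\theta)\ge r_a^2$ for every $\theta$.

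The heart of the argument is to prove that the global minimum of $D$ on $[0,\pi]$ occurs at an endpoint. Differentiating gives
\[
D'(\theta)=2e^{\cos\theta}\bigl[a\sin(\theta+\sin\theta)-e^{\cos\theta}\sin\theta\bigr],
\]
so on $(0,\pi)$ the sign of $D'$ equals that of $h(\theta):=a\,\dfrac{\sin(\theta+\sin\theta)}{\sin\theta}-e^{\cos\theta}$. I would show that $h(\theta)\to-1/e<0$ as $\theta\to\pi^{-}$ (so $\theta=\pi$ is always a local minimum of $D$), while $h(0^{+})=2a-e$, and then argue that $h$ changes sign at most once on $(0,\pi)$, necessarily from $+$ to $-$. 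This makes $D$ either monotone decreasing or increasing-then-decreasing, and in both cases $\min_{[0,\pi]}D=\min(D(0),D(\pi))=r_a^2$. Establishing this single-sign-change (unimodality) property is where I expect the main difficulty, since $h$ mixes the monotone factor $e^{\cos\theta}$ with the transcendental quotient $\sin(\theta+\sin\theta)/\sin\theta$, whose decay from $2$ at $\theta=0$ to $0$ at $\theta=\pi$ must be controlled by careful estimates rather than a one-line monotonicity claim.

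With $D(\theta)\ge r_a^2$ in hand, the open disc of radius $r_a$ centered at $a$ is contained in $\Omega_{e}$, which is exactly the assertion of the lemma; moreover, since the nearer vertex—$1/e$ when $a\le(e+e^{-1})/2$ and $e$ when $a\ge(e+e^{-1})/2$—lies on $\partial\Omega_{e}$ at distance precisely $r_a$, this disc is the largest one centered at $a$ that fits inside $\Omega_{e}$. The two branches in the formula for $r_a$ correspond exactly to which vertex realizes the minimal distance.
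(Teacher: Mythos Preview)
The paper does not prove Lemma~\ref{lem4.1}; it is quoted verbatim from Mendiratta, Nagpal and Ravichandran \cite[Lemma~2.2]{Exp} and used as a black box in the proof of Theorem~\ref{thm4.2}. There is therefore no in-paper argument to compare your proposal against.

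On its own merits, your outline is the natural one and matches the strategy of the original source: reduce the inclusion to showing that the Euclidean distance from the real center $a$ to the boundary curve $\gamma(\theta)=\exp(e^{i\theta})$ is realized at one of the real-axis intercepts $1/e$ or $e$. Your computations of $D(\theta)$, $D'(\theta)$, and the limiting values $h(0^+)=2a-e$, $h(\pi^-)=-1/e$ are correct, and the case split at $a=(e+e^{-1})/2$ is correctly identified as the crossover between the two vertex distances. The sharpness remark (that no larger disc fits, by the modulus constraint alone) is a nice addition, though the lemma as stated asks only for the inclusion.

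The genuine gap is the one you yourself flag: the claim that $h$ changes sign at most once on $(0,\pi)$ is asserted but not established, and without it the possibility of an interior local minimum of $D$ is not excluded. A clean way to close this is to show that the function $\rho(\theta)=e^{\cos\theta}\sin\theta/\sin(\theta+\sin\theta)$ is strictly increasing on $(0,\pi)$ (it runs from $e/2$ to $+\infty$), so that $h(\theta)=0$, i.e.\ $a=\rho(\theta)$, has at most one solution; this is the substance of the original proof in \cite{Exp}. Until that monotonicity is verified, the argument remains an outline rather than a complete proof.
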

	
	\begin{theorem}\label{thm4.2}
		If the function $f\in \mathcal{ST}(\alpha)$, then the radius of starlikeness asscociated with the exponential function for the function $F$ defined in \eqref{eqn2.1} is given by
		\begin{align*}
			R_{\mathcal{S}^{*}_{\mathit{e}}}(F)&= \begin{dcases}
				\sigma_{0} &\text{ if }\ 2\alpha+\beta-2\geqslant 0\\
				\sigma_{0} &\text{ if }\ 2\alpha+\beta-2<0\ \text{and}\ X(\alpha,\beta)\leqslant 0\\
				\tilde{\sigma_{0}}&\text{ if }\ 2\alpha+\beta-2<0\ \text{and} X(\alpha,\beta)> 0,
			\end{dcases}
		\end{align*}
		where
		\begin{align}
			\sigma_{0}=& \nonumber\dfrac{2(\mathit{e}-1)}{\mathit{e}(2-2\alpha+\beta)+\sqrt{(\mathit{e}(2-2\alpha+\beta))^2-4(\mathit{e}-1)(1-\mathit{e}(2\alpha-1+\beta))}},\\
			\tilde{\sigma_{0}} =& \nonumber \dfrac{2(\mathit{e}-1)}{(2-2\alpha+\beta)+\sqrt{(2-2\alpha+\beta)^2-4(\mathit{e}-1)(2\alpha-1+\beta-\mathit{e})}}
			\intertext{and}
			X(\alpha,\beta) =& \label{eqn4.1} 2(2+\beta-2\alpha)(1+\mathit{e}^2-2\mathit{e}(2\alpha+\beta -1))((-2\alpha+2+\beta
			)\\ &\quad \nonumber-\sqrt{(-2\alpha+2+\beta)^2-4(\mathit{e}-1)(2\alpha-1+\beta-\mathit{e})})\\
			&\quad\nonumber +4(2\alpha+\beta-1-\mathit{e})(\mathit{e}^2-1)(2\alpha+\beta-2).
		\end{align}
	\end{theorem}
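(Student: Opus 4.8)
The plan is to reuse verbatim the disc obtained in the proof of Theorem~\ref{thm2.1}: for $|z|\leqslant r$ the point $zF'(z)/F(z)$ lies in $\mathbb{D}(a_F(r);c_F(r))$ with
\[a_F(r)=\frac{1-(2\alpha-1+\beta)r^2}{1-r^2},\qquad c_F(r)=\frac{(2-2\alpha+\beta)r}{1-r^2}.\]
To place $r^{-1}F(rz)$ in $\mathcal{S}^{*}_{\mathit{e}}$ it is enough to force this disc inside $\Omega_{\mathit{e}}$. Since the centre $a_F(r)$ is real with $a_F(0)=1\in(1/\mathit{e},\mathit{e})$, Lemma~\ref{lem4.1} says the disc lies in $\Omega_{\mathit{e}}$ precisely when its radius does not exceed the distance from the centre to the nearer real boundary point $1/\mathit{e}$ or $\mathit{e}$, i.e.\ when $c_F(r)\leqslant\min\{a_F(r)-1/\mathit{e},\,\mathit{e}-a_F(r)\}$. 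I would therefore impose the two inequalities $a_F(r)-c_F(r)\geqslant 1/\mathit{e}$ and $a_F(r)+c_F(r)\leqslant \mathit{e}$ simultaneously. The first involves the function already studied in Theorem~\ref{thm2.1}, namely $a_F(r)-c_F(r)=\psi(r)$, which is strictly decreasing on $(0,1)$; a short computation gives the companion quantity
\[a_F(r)+c_F(r)=\frac{1+(1-2\alpha)r}{1-r}+\frac{\beta r}{1+r},\]
which is strictly increasing on $(0,1)$.

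Next I would locate the two threshold radii. Solving $\psi(r)=1/\mathit{e}$ gives the quadratic $(1-\mathit{e}(2\alpha+\beta-1))r^2-\mathit{e}(2-2\alpha+\beta)r+(\mathit{e}-1)=0$ whose smallest positive root is $\sigma_0$, while $a_F(r)+c_F(r)=\mathit{e}$ gives $(\mathit{e}+1-2\alpha-\beta)r^2+(2-2\alpha+\beta)r-(\mathit{e}-1)=0$ whose positive root is $\tilde{\sigma_0}$. The split in the statement is dictated by the monotonicity of the centre recorded just after \eqref{eqn2.7}: $a_F$ decreases when $2\alpha+\beta-2\geqslant0$ and increases when $2\alpha+\beta-2<0$. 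In the decreasing case $a_F(r)\leqslant a_F(0)=1<(\mathit{e}+\mathit{e}^{-1})/2$, so the centre never reaches the symmetry value; the outer inequality is then slack on $[0,\sigma_0]$ (at $\sigma_0$ its left side equals $2a_F(\sigma_0)-1/\mathit{e}\leqslant 2-1/\mathit{e}<\mathit{e}$) and only $\psi(r)\geqslant1/\mathit{e}$ can bind, giving radius $\sigma_0$. One checks along the way that $1/\mathit{e}<a_F(r)<\mathit{e}$ throughout, so Lemma~\ref{lem4.1} genuinely applies.

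In the increasing case $2\alpha+\beta-2<0$ both constraints are active and the radius equals $\min\{\sigma_0,\tilde{\sigma_0}\}$; the task is to decide which is smaller. I would do this by evaluating the inner constraint at the outer threshold, i.e.\ by comparing $\psi(\tilde{\sigma_0})$ with $1/\mathit{e}$. Writing $\mathit{e}\,\psi(r)-1$ over the common denominator $1-r^2$ shows its numerator is exactly the $\sigma_0$-quadratic $g(r):=(1-\mathit{e}(2\alpha+\beta-1))r^2-\mathit{e}(2-2\alpha+\beta)r+(\mathit{e}-1)$, so $\psi(\tilde{\sigma_0})-1/\mathit{e}$ has the same sign as $g(\tilde{\sigma_0})$. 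Reducing $g(\tilde{\sigma_0})$ to a linear expression in $\tilde{\sigma_0}$ by means of the defining relation $(\mathit{e}+1-2\alpha-\beta)\tilde{\sigma_0}^{\,2}=(\mathit{e}-1)-(2-2\alpha+\beta)\tilde{\sigma_0}$ (the leading coefficient $\mathit{e}+1-2\alpha-\beta$ being positive here), then inserting the closed form of $\tilde{\sigma_0}$ and clearing the positive denominator, yields after simplification precisely the quantity $X(\alpha,\beta)$ of \eqref{eqn4.1}. Because $\psi$ is decreasing, $X(\alpha,\beta)>0$ is equivalent to $\psi(\tilde{\sigma_0})>1/\mathit{e}$, hence to $\tilde{\sigma_0}<\sigma_0$ and radius $\tilde{\sigma_0}$, whereas $X(\alpha,\beta)\leqslant0$ gives $\sigma_0\leqslant\tilde{\sigma_0}$ and radius $\sigma_0$; this is the trichotomy claimed. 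I expect this algebraic reduction---carrying the surd of $\tilde{\sigma_0}$ through $g$ and recognising the surviving combination as $X$---to be the main obstacle, the rest being monotonicity bookkeeping.

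Finally I would establish sharpness with the extremal pair of Theorem~\ref{thm2.1}, $f(z)=z(1-z)^{2\alpha-2}\in\mathcal{ST}(\alpha)$ and $Q(z)=(1+z)^n$, for which
\[\frac{zF'(z)}{F(z)}=\frac{1+(2-2\alpha+\beta)z+(1-2\alpha-\beta)z^2}{1-z^2}.\]
Evaluating at the real point $z=-\sigma_0$ returns $a_F(\sigma_0)-c_F(\sigma_0)=\psi(\sigma_0)=1/\mathit{e}\in\partial\Omega_{\mathit{e}}$, and at $z=\tilde{\sigma_0}$ it returns $a_F(\tilde{\sigma_0})+c_F(\tilde{\sigma_0})=\mathit{e}\in\partial\Omega_{\mathit{e}}$. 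In each regime the image touches $\partial\Omega_{\mathit{e}}$ exactly at the asserted radius, so no larger value is admissible and the stated radii are best possible.
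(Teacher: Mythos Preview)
Your argument is correct and is essentially the paper's proof: the same disc inclusion \eqref{eqn2.7}, the same appeal to Lemma~\ref{lem4.1}, the same two quadratics defining $\sigma_0$ and $\tilde\sigma_0$, and the same extremal pair $f(z)=z(1-z)^{2\alpha-2}$, $Q(z)=(1+z)^n$ for sharpness. The only cosmetic difference is that the paper decides between $\sigma_0$ and $\tilde\sigma_0$ by locating $a_F(\tilde\sigma_0)$ relative to the midpoint $(\mathit{e}+\mathit{e}^{-1})/2$ via an auxiliary polynomial $\zeta(r)=2\mathit{e}(1-r^2)\bigl(a_F(r)-(\mathit{e}+\mathit{e}^{-1})/2\bigr)$, whereas you evaluate your $\sigma_0$-quadratic $g$ at $\tilde\sigma_0$; since $g(\tilde\sigma_0)=\mathit{e}(1-\tilde\sigma_0^{\,2})\bigl(2a_F(\tilde\sigma_0)-\mathit{e}-\mathit{e}^{-1}\bigr)=\zeta(\tilde\sigma_0)$, the two computations are literally the same and both reduce to the sign of $X(\alpha,\beta)$.
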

	\begin{proof}
		Our aim is to show that the $\mathbb{D}(a_F (r);c_F(r)) \subset \Omega_{\mathit{e}}$ for all $0<r\leqslant R_{\mathcal{S}^{*}_{\mathit{e}}}(F)$. Let
		\begin{align*}
			\sigma_{0}:=&\dfrac{2(\mathit{e}-1)}{\mathit{e}(2-2\alpha+\beta)+\sqrt{(\mathit{e}(2-2\alpha+\beta))^2-4(\mathit{e}-1)(1-\mathit{e}(2\alpha-1+\beta))}},\intertext{and}
			\tilde{\sigma_{0}} :=&\dfrac{2(\mathit{e}-1)}{(2-2\alpha+\beta)+\sqrt{(2-2\alpha+\beta)^2-4(\mathit{e}-1)(2\alpha-1+\beta-\mathit{e})}}.
		\end{align*}
		It is clear that $\sigma_{0}$ and $\tilde{\sigma_{0}}$ are both positive as both $2-2\alpha+\beta$ and $\mathit{e}-1$ are positive..
		For the polynomial
		\begin{equation}\label{eqn4.2}
			\phi(r):=(1-\mathit{e}(-1+2\alpha+\beta))r^2-\mathit{e}(2-2\alpha+\beta)r+(\mathit{e}-1)
		\end{equation} obtained from the equivalent form $\phi(r)=0$ of the equation $c_{F}(r)=a_{F}(r)-1/\mathit{e}$, it is observed that $\phi(0)=\mathit{e}-1>0,\ \phi(1)=-2\mathit{e} \beta<0 $, showing that there is a zero for $\phi$ in the interval $(0,1)$, namely $\sigma_{0}$. Also, the positive root of the equation $c_{F}(r)=\mathit{e}-a_{F}(r)$ or $\psi(r)=0$ with
		\begin{equation}\label{eqn4.3}
			\psi(r):=(-1+2\alpha+\beta-\mathit{e})r^2-(2-2\alpha+\beta)r+(\mathit{e}-1)
		\end{equation} is $\tilde{\sigma_{0}}$. To verify that $\psi$ has a zero in the interval $(0,1)$, it is seen that $\psi(0)=\mathit{e}-1>0$ and $\psi(1)=4(\alpha-1)<0$.
		
		Case (i): $2\alpha+\beta-2\geqslant 0$. Since the center $a_{F}$ in \eqref{eqn2.7} is a decreasing function of $r$, $a_{F}(r)>a_{F}(\sigma_0),\ \text{for}\ r\in (0,\sigma_0)$. By definition, $\sigma_0$ is the solution the equation $c_{F}(r)=a_{F}(r)-1/\mathit{e}$, also, the radius in \eqref{eqn2.7} satisfies $c_{F}(r)>0,\ r\in (0,1)$, together imply that $a_{F}(r)>a_{F}(\sigma_{0})>1/\mathit{e},\ \text{for}\  r\in (0,\sigma_{0})$. Further,  $a_{F}(r)< a_{F}(0)=1<(\mathit{e}+\mathit{e}^{-1})/2\approx1.54308,\ \text{for}\ r\in (0,\sigma_0) $. Thus, it is established that \[\dfrac{1}{\mathit{e}}<a_{F}(r)\leqslant \dfrac{\mathit{e}+\mathit{e}^{-1}}{2\mathit{e}},\ \text{for}\ r\in (0,\sigma_0).\]
		Applying Lemma~\ref{lem4.1} we get $\mathbb{D}(a_F (\sigma_0);c_F(\sigma_0)) \subset \Omega_{\mathit{e}}$ that is, the radius of starlikeness associated with the exponential function for the function $F$ is atleast $\sigma_{0}$.
		
		Case (ii): $2\alpha+\beta -2<0$ and $X(\alpha,\beta)\leqslant 0$. The number \[\tilde{\sigma_{1}}=\sqrt{\dfrac{1-2\mathit{e}+\mathit{e}^2}{1+2\mathit{e}+\mathit{e}^2-4\mathit{e} \alpha-2\mathit{e} \beta}}<1\]
		is the positive root of the equation $a_{F}(r)=(\mathit{e}+\mathit{e}^{-1})/2$, or equivalently, $\zeta(r)=0$ with  $\zeta(r):=(1+\mathit{e}^2-2\mathit{e}(2\alpha+\beta-1))r^2+2\mathit{e}-\mathit{e}^2-1$. Here, $\zeta(0)=2\mathit{e}-\mathit{e}^2-1\approx -2.9525<0$ and $\zeta(1)=-2\mathit{e}(2\alpha+\beta-2)>0$ justifies the existence of a zero for the polynomial $\zeta$ in the interval $(0,1)$. Further,
		\begin{align}\label{eqn4.4}
			\zeta(\tilde{\sigma_{0}}) =& \dfrac{1}{4(-1-\mathit{e}+2\alpha+\beta)^2} \big (
			2(2+\beta-2\alpha)(1+\mathit{e}^2-2\mathit{e}(2\alpha+\beta -1))\\&\quad\nonumber \times((-2\alpha+2+\beta
			) -\sqrt{(-2\alpha+2+\beta)^2-4(\mathit{e}-1)(2\alpha-1+\beta-\mathit{e})}) \\\nonumber&\quad{}+ 4(2\alpha+\beta-1-\mathit{e})(\mathit{e}^2-1)(2\alpha+\beta-2)\big),
		\end{align}
		and from this with \eqref{eqn4.1} we infer that $X(\alpha,\beta) \leqslant 0$ implies $\zeta(\tilde{\sigma_{0}}) \leqslant 0$. Also, $\zeta(0)<0,\ \zeta(\tilde{\sigma_{0}})\leqslant 0$ along with the fact $\zeta(\tilde{\sigma_{1}})=0$ gives $\tilde{\sigma_{0}} \leqslant \tilde{\sigma_{1}}$, which implies that $a_{F}(\tilde{\sigma_{0}})\leqslant a_{F}(\tilde{\sigma_{1}})=(\mathit{e}+\mathit{e}^{-1})/2$. The application of Lemma~\ref{lem4.1} gives $\mathbb{D}(a_F (\sigma_0);c_F(\sigma_0)) \subset \Omega_{\mathit{e}}$ or in other words, the radius of starlikeness associated with the exponential function for $F$ is atleast $\sigma_{0}$.
		
		Case (iii): $2\alpha+\beta -2<0$ and $X(\alpha,\beta)> 0$. Here following the same line of thought as in Case (ii),  $\zeta(\tilde{\sigma_{0}})>0$ implies $a_{F}(\tilde{\sigma_{0}})>(\mathit{e}+\mathit{e}^{-1})/2$, and thus  Lemma~\ref{eqn4.1} gives the required radius to be atleast $\tilde{\sigma_{0}}$.
		
		To show that the obtained radius values are the best possible, take $f(z)=z/(1-z)^{-2\alpha+2}\in \mathcal{ST}(\alpha)$ and the polynomial $Q(z)=(1+z)^{n}$, these choices give
		the expression for $zF'(z)/F(z)$, as already shown in the proof of Theorem~\ref{thm2.1}, to be
		\begin{align}
			\dfrac{zF'(z)}{F(z)} &=\dfrac{(1-2\alpha-\beta)z^2+(1+(1-2\alpha)+\beta)z+1}{1-z^2}\nonumber\\
			&=\mathit{e}-\dfrac{(-1+2\alpha+\beta-\mathit{e})z^2-(2-2\alpha+\beta)z+\mathit{e}-1}{1-z^2}.\label{eqn4.6}
			\intertext{It is seen that \eqref{eqn4.6} can also be written as} \dfrac{zF'(z)}{F(z)}&=\dfrac{1}{\mathit{e}}+\dfrac{(1-\mathit{e}(-1+2\alpha+\beta))z^2+\mathit{e}(2-2\alpha+\beta)z+\mathit{e}-1}{\mathit{e}(1-z^2)}.\label{eqn4.7}
		\end{align}
		The definition of the polynomial $\phi$ in \eqref{eqn4.2} for $r=\sigma_{0}$ together with \eqref{eqn4.7} gives \[\left|\log \dfrac{(-\sigma_{0})F'(-\sigma_{0})}{F(-\sigma_{0})}\right|=\left|\log \dfrac{1}{\mathit{e}}\right|=1,\]
		proving sharpness for $\sigma_{0}$.
		Further, the polynomial $\psi$ in \eqref{eqn4.3} for $r=\tilde{\sigma_{0}}$ and \eqref{eqn4.6} provide
		\[\left|\log \dfrac{\tilde{\sigma_{0}}F'(\tilde{\sigma_{0}})}{F(\tilde{\sigma_{0}})}\right|=\left|\log \mathit{e}\right|=1.\]
		This proves sharpness for $\tilde{\sigma_{0}}$.
	\end{proof}
	
	If we let $\alpha$ goes to $1$ in Theorem~\ref{thm4.2}, then the radius of starlikeness associated with the exponential function for the function $F(z)=z(Q(z))^{\beta/n}$ where $Q$ is a non-constant polynomial of degree $n$ non-vanishing on the unit disc and $\beta >0$, comes out to be $(\mathit{e}-1)/(\mathit{e}\beta+\mathit{e}-1)$.	Moreover, when $\beta\to 0$ in Theorem~\ref{thm4.2}, we obtain  the radius of starlikeness associated with the exponential function for the class of starlike functions of order $\alpha, 0\leqslant \alpha <1$ obtained by Mendiratta et al. in \cite[Theorem~3.4]{Exp} and also by  Khatter et al. in \cite[ Theorem~2.17 (2)]{KhatSivaRaviExpo}    for $\mathcal{S}^{*}_{0,\mathit{e}}$  with $A=1-2\alpha\ \text{and}\  B=-1$.
	\section{Starlike functions associated with a cardioid}
	Sharma et al. \cite{KanNavRavi} studied the class $\mathcal{S}^{*}_{c}=\mathcal{S}^{*}(\varphi_{c}),\ \varphi_{c}(z)=(1+(4/3)z+(2/3)z^2)$ of starlike functions associated with a cardioid and proved the following lemma.
	\begin{lemma}\cite[lemma~2.5]{KanNavRavi} \label{lem5.1}
		For $1/3<a<3$,
		\begin{align*}
			r_a &=
			\begin{dcases}
				\frac{3a-1}{3} & \text{ if }\ \frac{1}{3}<a\leq \frac{5}{3}\\
				3-a & \text{ if } \ \frac{5}{3}\leq a<3.
			\end{dcases}
		\end{align*}
		Then $\{w: |w-a|<r_a\} \subset \Omega _c$. Here $\Omega_c$ is the region bounded by the cadioid $\{x+\iota y: (9x^2+9y^2-18x+5)^2 -16(9x^2+9y^2-6x+1)=0\}$.
	\end{lemma}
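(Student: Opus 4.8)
The plan is to identify $r_a$ with the Euclidean distance from the real point $a$ to the boundary curve $\partial\Omega_c$, which is the image $\varphi_c(\partial\mathbb{D})$ of the unit circle. Since $\Omega_c=\varphi_c(\mathbb{D})$ is the simply connected region bounded by this cardioid and the segment $(1/3,3)$ of the real axis lies inside it (indeed $\IM\varphi_c(e^{\iota\theta})=(4/3)\sin\theta(1+\cos\theta)$ vanishes only at $\theta=0,\pi$, so the curve meets the real axis solely at the cusp $\varphi_c(-1)=1/3$ and the tip $\varphi_c(1)=3$), the largest open disc centered at $a\in(1/3,3)$ contained in $\Omega_c$ has radius equal to the distance from $a$ to $\partial\Omega_c$. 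Thus it suffices to minimise the squared distance $D(\theta):=\abs{\varphi_c(e^{\iota\theta})-a}^2$ over $\theta\in[0,2\pi)$ and to check that the minimum equals $r_a^2$ with $r_a$ as stated.

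First I would expand $D(\theta)=\abs{\varphi_c(e^{\iota\theta})}^2-2a\,\RE\varphi_c(e^{\iota\theta})+a^2$. A direct computation gives $\RE\varphi_c(e^{\iota\theta})=1+(4/3)\cos\theta+(2/3)\cos2\theta$ and $\abs{\varphi_c(e^{\iota\theta})}^2=29/9+(40/9)\cos\theta+(4/3)\cos2\theta$, so that after substituting $\cos2\theta=2\cos^2\theta-1$ and writing $t=\cos\theta\in[-1,1]$, the quantity $9D$ becomes the quadratic
\[
g(t)=24(1-a)t^2+(40-24a)t+9a^2-6a+17.
\]
The two endpoints recover the real-axis boundary points: $g(1)=9(3-a)^2$ (the tip $w=3$, i.e.\ $\theta=0$) and $g(-1)=(3a-1)^2$ (the cusp $w=1/3$, i.e.\ $\theta=\pi$), giving candidate distances $3-a$ and $(3a-1)/3$.

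The core of the argument is to show that the minimum of $g$ on $[-1,1]$ is always attained at an endpoint, and then to decide which one. The leading coefficient $24(1-a)$ governs the shape: when $a>1$ the parabola opens downward, so on a closed interval its minimum is automatically at an endpoint; when $a<1$ it opens upward with vertex at $t^\ast=-\tfrac{5-3a}{6(1-a)}$, and I would check that the hypothesis $a>1/3$ forces $t^\ast<-1$, whence $g$ is increasing on $[-1,1]$ and again attains its minimum at $t=-1$ (the linear case $a=1$ is immediate). It then remains to compare the two endpoint values: since $3a-1>0$ and $9-3a>0$ on $(1/3,3)$, one has $g(-1)\leqslant g(1)\iff 3a-1\leqslant 9-3a\iff a\leqslant 5/3$. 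This yields minimal distance $(3a-1)/3$ for $1/3<a\leqslant 5/3$ and $3-a$ for $5/3\leqslant a<3$, matching the two branches of $r_a$ and proving $\{w:\abs{w-a}<r_a\}\subset\Omega_c$.

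I expect the main obstacle to be the second step of the last paragraph: ruling out an interior minimiser in the upward-opening regime $1/3<a<1$. This is exactly where the lower bound $a>1/3$ on the center enters, and it is what makes the cusp, rather than some interior boundary point, the nearest point of $\partial\Omega_c$; verifying $t^\ast<-1$ precisely on $(1/3,1)$ is the one delicate inequality. Everything else reduces to routine algebra once $g(t)$ is in hand.
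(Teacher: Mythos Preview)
The paper does not prove this lemma; it is quoted verbatim from \cite[Lemma~2.5]{KanNavRavi} and used as a black box in the proof of Theorem~\ref{thm5.2}. So there is no in-paper argument to compare your proposal against.

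That said, your direct computation is correct and self-contained. The parametrisation $t=\cos\theta$ reducing $9D(\theta)$ to the quadratic $g(t)=24(1-a)t^{2}+(40-24a)t+9a^{2}-6a+17$ is accurate, and the endpoint values $g(1)=9(3-a)^{2}$, $g(-1)=(3a-1)^{2}$ recover the two candidate radii. Your case analysis is sound: for $a>1$ the downward-opening parabola has its minimum at an endpoint; for $a=1$ the function is linear and increasing; and for $1/3<a<1$ the vertex $t^{\ast}=-\tfrac{5-3a}{6(1-a)}$ satisfies $t^{\ast}<-1$ precisely because $5-3a>6(1-a)\iff a>1/3$, so $g$ is increasing on $[-1,1]$ and the minimum is at $t=-1$. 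The endpoint comparison $(3a-1)\leqslant 9-3a\iff a\leqslant 5/3$ then gives the claimed split. One small point worth making explicit is that $\varphi_{c}$ is univalent on $\mathbb{D}$ (since $\varphi_{c}(z_{1})=\varphi_{c}(z_{2})$ forces $z_{1}+z_{2}=-2$), so $\Omega_{c}=\varphi_{c}(\mathbb{D})$ really is the open Jordan interior of the cardioid and the distance-to-boundary argument is legitimate.
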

	The following theorem gives the radius of starlikeness associated with a cardioid for the function $F$ given in \eqref{eqn2.1}.
	\begin{theorem}\label{thm5.2}
		Let the function $f\in \mathcal{ST}(\alpha)$, the $\mathcal{S}^{*}_{c}$ radius for the function $F$ defined in \eqref{eqn2.1} is
		\begin{align*}
			R_{\mathcal{S}^{*}_{c}}(F)&= \begin{dcases}
				\sigma_{0} &\text{ if }\ 2\alpha+\beta-2\geqslant 0\\
				\sigma_{0} &\text{ if }\ 2\alpha+\beta-2<0\ \text{and}\ X(\alpha,\beta)\leqslant 0\\
				\tilde{\sigma_{0}}&\text{ if }\ 2\alpha+\beta-2<0\ \text{and} X(\alpha,\beta)> 0,
			\end{dcases}
		\end{align*}
		where
		\begin{align}
			\sigma_{0}=&\nonumber \dfrac{4}{3(2-2\alpha+\beta)+\sqrt{(6-6\alpha+3\beta)^2+8(6\alpha+3\beta-4)}},\\
			\tilde{\sigma_{0}} =& \nonumber\dfrac{4}{(2-2\alpha+\beta)+\sqrt{(2-2\alpha+\beta)^2-8(2\alpha+\beta-4)}}
			\intertext{and}
			X(\alpha,\beta) =& \label{eqn5.1} 2(8-6\alpha-3\beta)(6\alpha-6-3\beta)(6\alpha-6-3\beta \\&\quad\nonumber+ \sqrt{(6-6\alpha+3\beta)^2+8(6\alpha+3\beta-4)})\\&\quad\nonumber+ 48(6\alpha+3\beta-4)(2-2\alpha-\beta).
		\end{align}

	\end{theorem}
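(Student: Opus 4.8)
The plan is to follow the template of Theorem~\ref{thm4.2}, replacing the exponential region $\Omega_{\mathit{e}}$ by the cardioid region $\Omega_c$ and Lemma~\ref{lem4.1} by Lemma~\ref{lem5.1}. The starting point is the disc $\mathbb{D}(a_F(r);c_F(r))$ containing $zF'(z)/F(z)$ for $|z|\leqslant r$, established in \eqref{eqn2.7}, with center $a_F(r)=(1-(2\alpha-1+\beta)r^2)/(1-r^2)$ and radius $c_F(r)=(2-2\alpha+\beta)r/(1-r^2)$; recall that $a_F$ is increasing in $r$ when $2\alpha+\beta-2<0$ and decreasing when $2\alpha+\beta-2\geqslant 0$. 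By Lemma~\ref{lem5.1}, the inclusion $\mathbb{D}(a_F(r);c_F(r))\subset\Omega_c$ holds precisely when $1/3<a_F(r)<3$ together with $c_F(r)\leqslant a_F(r)-1/3$ in the range $a_F(r)\leqslant 5/3$ and $c_F(r)\leqslant 3-a_F(r)$ in the range $a_F(r)\geqslant 5/3$.

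First I would record the two boundary equations. Clearing denominators in $c_F(r)=a_F(r)-1/3$ yields the quadratic $\phi(r):=(4-6\alpha-3\beta)r^2-3(2-2\alpha+\beta)r+2$, and in $c_F(r)=3-a_F(r)$ the quadratic $\psi(r):=(2\alpha+\beta-4)r^2-(2-2\alpha+\beta)r+2$. Since $\phi(0)=2>0$, $\phi(1)=-6\beta<0$, and $\psi(0)=2>0$, $\psi(1)=4(\alpha-1)<0$, each has a root in $(0,1)$; rationalising the quadratic formula (using $-2C/(\,{-}B+\sqrt{B^2-4AC}\,)$) identifies these roots with $\sigma_0$ and $\tilde{\sigma_0}$ as displayed. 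When $2\alpha+\beta-2\geqslant 0$ the center decreases from $a_F(0)=1$, so $a_F(r)<5/3$ throughout and, exactly as in Case~(i) of Theorem~\ref{thm4.2}, the left constraint $c_F(r)\leqslant a_F(r)-1/3$ is the binding one, with the positivity $a_F(r)>a_F(\sigma_0)>1/3$ following from $c_F(\sigma_0)>0$; thus the radius is $\sigma_0$.

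When $2\alpha+\beta-2<0$ the center increases and may cross the threshold $5/3$. I would introduce $\tilde{\sigma_1}=\sqrt{2/(8-6\alpha-3\beta)}$, the root in $(0,1)$ of $\zeta(r):=(8-6\alpha-3\beta)r^2-2$ obtained from $a_F(r)=5/3$ (here $\zeta(0)=-2<0$, $\zeta(1)=-3(2\alpha+\beta-2)>0$, and $\zeta$ is increasing). The decisive step is to evaluate $\zeta(\tilde{\sigma_0})$: squaring $\tilde{\sigma_0}$ and clearing the surd shows, in parallel with \eqref{eqn4.4}, that $\zeta(\tilde{\sigma_0})$ equals $X(\alpha,\beta)$ of \eqref{eqn5.1} divided by the positive quantity $4(2\alpha+\beta-4)^2$, so that $X(\alpha,\beta)\leqslant 0$ is equivalent to $\zeta(\tilde{\sigma_0})\leqslant 0$, hence to $\tilde{\sigma_0}\leqslant\tilde{\sigma_1}$ and $a_F(\tilde{\sigma_0})\leqslant 5/3$. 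In that case the left constraint still governs and the radius is $\sigma_0$ (Case~(ii)); when $X(\alpha,\beta)>0$ the center has already passed $5/3$, the right constraint $c_F(r)\leqslant 3-a_F(r)$ governs, and the radius is $\tilde{\sigma_0}$ (Case~(iii)). I expect this surd simplification to be the main computational obstacle, since the leading coefficient $8-6\alpha-3\beta$ of $\zeta$ enters $X(\alpha,\beta)$ as an explicit factor and must be reconciled against the surd in $\tilde{\sigma_0}$.

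Finally, for sharpness I would take the same extremal data $f(z)=z(1-z)^{2\alpha-2}\in\mathcal{ST}(\alpha)$ and $Q(z)=(1+z)^n$, for which $zF'(z)/F(z)=((1-2\alpha-\beta)z^2+(2-2\alpha+\beta)z+1)/(1-z^2)$ as computed in the proof of Theorem~\ref{thm2.1}. Evaluating at $z=-\sigma_0$ gives the leftmost edge value $a_F(\sigma_0)-c_F(\sigma_0)=1/3$, which is exactly the cusp $\varphi_c(-1)=1/3$ on $\partial\Omega_c$, so the image lands on the boundary and $\sigma_0$ cannot be enlarged; evaluating at $z=\tilde{\sigma_0}$ gives the rightmost edge value $a_F(\tilde{\sigma_0})+c_F(\tilde{\sigma_0})=3=\varphi_c(1)$, again a boundary point, which proves the sharpness of $\tilde{\sigma_0}$.
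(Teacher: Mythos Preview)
Your overall architecture matches the paper's proof: the disc estimate \eqref{eqn2.7}, the two quadratics $\phi,\psi$ with roots $\sigma_0,\tilde{\sigma_0}$, the threshold polynomial $\zeta(r)=(8-6\alpha-3\beta)r^2-2$ with root $\tilde{\sigma_1}$, and the extremal pair $f(z)=z(1-z)^{2\alpha-2}$, $Q(z)=(1+z)^n$ for sharpness. Cases~(i) and the sharpness verification are fine and coincide with the paper.

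The gap is in your ``decisive step''. You propose to evaluate $\zeta$ at $\tilde{\sigma_0}$ and assert that $\zeta(\tilde{\sigma_0})=X(\alpha,\beta)/\bigl(4(2\alpha+\beta-4)^2\bigr)$. That identity is false: look at the surd inside $X(\alpha,\beta)$ in \eqref{eqn5.1}, namely $\sqrt{(6-6\alpha+3\beta)^2+8(6\alpha+3\beta-4)}$. This is precisely the surd appearing in $\sigma_0$, \emph{not} the surd $\sqrt{(2-2\alpha+\beta)^2-8(2\alpha+\beta-4)}$ that sits in $\tilde{\sigma_0}$. The paper in fact evaluates $\zeta(\sigma_0)$ (not $\zeta(\tilde{\sigma_0})$) and obtains
\[
\zeta(\sigma_0)=\frac{X(\alpha,\beta)}{4(6\alpha+3\beta-4)^2},
\]
so that $X(\alpha,\beta)\leqslant 0\Longleftrightarrow \zeta(\sigma_0)\leqslant 0\Longleftrightarrow \sigma_0\leqslant\tilde{\sigma_1}\Longleftrightarrow a_F(\sigma_0)\leqslant 5/3$, after which Lemma~\ref{lem5.1} applies directly at $r=\sigma_0$. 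You were presumably following Theorem~\ref{thm4.2}, whose $X$ does come from $\zeta(\tilde{\sigma_0})$; here the roles are switched. Note also that your normalising factor $4(2\alpha+\beta-4)^2$ is wrong even for $\zeta(\tilde{\sigma_0})$; the correct denominator for $\zeta(\sigma_0)$ is $4(6\alpha+3\beta-4)^2$.

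It is worth remarking that the two sign conditions $\zeta(\sigma_0)\leqslant 0$ and $\zeta(\tilde{\sigma_0})\leqslant 0$ happen to be equivalent (because $g_1:=a_F-c_F-\tfrac13$ and $g_2:=3-a_F-c_F$ are both decreasing and coincide at $r=\tilde{\sigma_1}$, so $\sigma_0\lessgtr\tilde{\sigma_1}\Leftrightarrow\tilde{\sigma_0}\lessgtr\tilde{\sigma_1}$). Hence your case split is logically sound, but it will not reproduce the \emph{given} expression $X(\alpha,\beta)$; to prove the theorem as stated you must compute $\zeta(\sigma_0)$.
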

	
	\begin{proof}
		Consider the disc mentioned in \eqref{eqn2.7}, it needs to be shown that this disc lies in the region $\Omega_{c}$ for all $0<r\leqslant R_{\mathcal{S}^{*}_{c}}(F)$. Take the equations $c_{F}(r)=a_{F}(r)-1/3$ and $c_{F}(r)=3-a_{F}(r)$; which are equivalent to $\phi(r)=0\ \text{and}\ \psi(r)=0$ respectively, with the corresponding polynomials $\phi$ and $\psi$ given by
		\begin{align}
			\phi(r):&=(3(2\alpha-1+\beta)-1)r^2+3(2-2\alpha+\beta)r-2,\label{eqn5.2}\intertext{and}
			\psi(r):&= (2\alpha-1+\beta-3)r^2-(2-2\alpha+\beta)r+2.\label{eqn5.3}
		\end{align}
		For the polynomial $\phi$, $\phi(0)=-2<0,\ \phi(1)=6\beta>0$; thus $\phi$ has a zero in the interval $(0,1)$, let it be denoted by $\sigma_{0}$. Also, considering the polynomial $\psi$, it is seen that $\psi(0)=2>0$ and $\psi(1)=4(\alpha-1)<0$, let the zero of $\psi$ in the interval $(0,1)$ be denoted by $\tilde{\sigma_{0}}$. Then it is obtained that
		\begin{align*}
			\sigma_{0}:=&\dfrac{4}{3(2-2\alpha+\beta)+\sqrt{(6-6\alpha+3\beta)^2+8(6\alpha+3\beta-4)}},\intertext{and}
			\tilde{\sigma_{0}} :=&\dfrac{4}{(2-2\alpha+\beta)+\sqrt{(2-2\alpha+\beta)^2-8(2\alpha+\beta-4)}}.
		\end{align*}
		Here, it is evident from their values that both $\sigma_{0}$ and $\tilde{\sigma_{0}}$ are indeed positive.

		Case (i): $2\alpha+\beta-2 \geqslant 0$. First it is shown that the center $a_{F}$ in \eqref{eqn2.7} satisfies
		\begin{equation}\label{eqn5.4}
			\dfrac{1}{3}<a_{F}(r)< \dfrac{5}{3},\ r\in (0,\sigma_{0}).
		\end{equation}
		The fact that the center is a decreasing function of $r$, implies $a_{F}(r)>a_{F}(\sigma_{0})\ \text{for}\  r\in (0,\sigma_0)$. Also, $\sigma_{0}$ is the root of the equation $a_{F}(r)-1/3=c_{F}(r)$, along with the fact that the radius satisfies $c_{F}(r)>0,\ r\in (0,1)$ gives $a_{F}(r)>a_{F}(\sigma_{0})>1/3\ \text{for}\ r\in (0,\sigma_{0})$. Further, $a_{F}(r)<a_{F}(0)=1<5/3,\ r\in (0,\sigma_0)$. This  proves \eqref{eqn5.4} and applying Lemma~\ref{lem5.1}, infers that $\mathbb{D}(a_F (\sigma_{0});c_F(\sigma_{0})) \subset \Omega_{c}$, that is, the radius of starlikeness associated with a cardioid for the function $F$ is atleast $\sigma_{0}$.
		
		Case (ii): $2\alpha+\beta-2<0\ \text{and}\ X(\alpha,\beta)\leqslant 0$. The equation $a_{F}(r)=5/3$ from Lemma~\ref{lem5.1}, takes the form $\zeta(r)=0$, with $\zeta(r):=(8-6\alpha-3\beta)r^2-2$. Then, it is seen that $\zeta(0)=-2<0$ and $\zeta(1)=-3(2\alpha+\beta-2)>0$. This shows that $\zeta$ has a zero in the interval $(0,1)$, let this be denoted by $\tilde{\sigma_{1}}$, then \[\tilde{\sigma_{1}}=\sqrt{\dfrac{2}{8-6\alpha-3\beta}}.\]
		Also since
		\begin{align}
			\zeta(\sigma_{0}) =&\label{eqn5.4a} \dfrac{1}{4(-4+6\alpha+3\beta)^2}(2(8-6\alpha-3\beta)(6\alpha-6-3\beta)(6\alpha-6-3\beta \\
			&\quad{}\nonumber+ \sqrt{(6-6\alpha+3\beta)^2 +8(6\alpha+3\beta-4)}) \\&\quad\nonumber+48(6\alpha+3\beta-4)(2-2\alpha-\beta)),
		\end{align}
		from \eqref{eqn5.1} and \eqref{eqn5.4a} it is evident that $X(\alpha,\beta)\leqslant 0$ is equivalent to saying $\zeta(\sigma_{0})\leqslant 0$. The facts that $\zeta(0)<0,\ \zeta(\sigma_{0})\leqslant 0$ and $\zeta(\tilde{\sigma_{1}})=0$ together imply that $\sigma_{0} \leqslant \tilde{\sigma_{1}}$.  Further, $\sigma_{0}\leqslant \tilde{\sigma_{1}}$ will imply that $a_{F}(\sigma_{0})\leqslant a_{F}(\tilde{\sigma_{1}})=5/3$, and thus using Lemma~\ref{lem5.1}, $\mathbb{D}(a_F (\sigma_{0});c_F(\sigma_{0})) \subset \Omega_{c}$. Thus, proving that the required radius value is atleast $\sigma_{0}$.
		
		Case (iii):  $2\alpha+\beta-2<0\ \text{and}\ X(\alpha,\beta)> 0$. On the similar lines as in Case (ii), $X(\alpha,\beta)>0$ implies $\zeta(\sigma_{0})>0$, which gives that $a_{F}(\sigma_{0})>5/3$, this inturn, after another application of  Lemma~\ref{lem5.1} concludes that the radius of starlikeness associated with a cardioid for the function $F$ is atleast $\tilde{\sigma_{0}}$.

		To verify the sharpness of the obtained radius values, take $f(z)=z/(1-z)^{-2\alpha+2}\in \mathcal{ST}(\alpha)$ and the polynomial $Q$ as $Q(z)=(1+z)^{n}$. Thus, the expression for $zF'(z)/F(z)$ as seen in the proof for Theorem~\ref{thm2.1}, becomes
		\begin{align}
			\dfrac{zF'(z)}{F(z)}&= \dfrac{(1-2\alpha-\beta)z^2+(2-2\alpha+\beta)z+1}{1-z^2}.\label{eqn5.5}
		\end{align}
		The polynomial $\phi$ in \eqref{eqn5.2} gives that for $r=\sigma_{0}$,
		\begin{equation}
			3((1-2\alpha-\beta)r^2-(2-2\alpha+\beta)r+1)=1-r^2. \label{eqn5.6}
		\end{equation}
		Thus, the sharpness for $\sigma_{0}$ is proved by using \eqref{eqn5.5} and \eqref{eqn5.6} which gives that $zF'(z)/F(z)=1/3=\varphi_{c}(-1)$ for $z=-\sigma_{0}$. 	Also, the polynomial $\psi$ in \eqref{eqn5.3} for $r=\tilde{\sigma_{0}}$ gives
		\begin{equation}
			(1-2\alpha-\beta)r^2+(2-2\alpha+\beta)r+1=3(1-r^2). \label{eqn5.7}
		\end{equation}
		Thus, using \eqref{eqn5.7} in \eqref{eqn5.5} it is seen that $\tilde{\sigma_{0}}F'(\tilde{\sigma_{0}})/F(\tilde{\sigma_{0}})=3=\varphi_{c}(1)$. This proves the sharpness of $\tilde{\sigma_{0}}$.
	\end{proof}
	
	If we let $\alpha$ goes to $1$ in Theorem~\ref{thm5.2}, then we obtain that the radius of starlikeness associated with a cardioid for the function $F(z)=z(Q(z))^{\beta/n}$, where $Q$ is a non-constant polynomial of degree $n$ non-vanishing on the unit disc and $\beta >0$, comes out to be $2/(2+3\beta)$. Moreover,  by letting $\beta\to 0$   in Theorem~\ref{thm5.2}, we get the radius of starlikeness associated with a cardioid (\cite[Theorem~4.7]{KanNavRavi} with $A=1-2\alpha\ \text{and}\  B=-1$) for the class of starlike functions of order $\alpha, 0\leqslant \alpha <1$.
	
	\section{Starlike functions associated with a rational function}
	Kumar and Ravichandran \cite{KumarRavi} introduced the class of starlike functions associated with the rational function $\varphi_{R}(z)=1+((z^2+kz)/(k^2-kz)),\ \text{with}\  k=\sqrt{2}+1$. This class is represented by  $\mathcal{S}^{*}_{R}=\mathcal{S}^{*}(\varphi_{R}(z))$.They also showed the following result, which is used in finding the $\mathcal{S}^{*}_{R}$ radius for the function $F$ defined in \eqref{eqn2.1}.
	
	\begin{lemma}\cite[lemma~2.2]{KumarRavi} \label{lem6.1}
		For $2(\sqrt{2}-1)<a<2,$
		\begin{align*}
			r_a &=
			\begin{dcases}
				a-2(\sqrt{2}-1) & \text{ if }\ 2(\sqrt{2}-1)<a\leq \sqrt{2}\\
				2-a & \text{ if } \ \sqrt{2}\leq a<2.
			\end{dcases}
		\end{align*}
		Then $\{w: |w-a|<r_a\} \subset \varphi_{R}(\mathbb{D}).$
	\end{lemma}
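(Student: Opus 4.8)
The plan is to reduce the containment to a distance computation. Writing the superordinate function in the compact form
\[
\varphi_{R}(z)=\frac{z^{2}+k^{2}}{k(k-z)},\qquad k=\sqrt{2}+1,
\]
I would first record the identity $k^{2}=2k+1$ (equivalently $k^{2}-1=2k$ and $k+1=\sqrt{2}\,k$), which drives every later cancellation. Since $\varphi_{R}$ has real coefficients its image is symmetric about the real axis, and as $\varphi_{R}$ is the univalent defining function of a Ma--Minda class, $\varphi_{R}(\mathbb{D})$ is a Jordan domain. Consequently, for a real centre $a\in\varphi_{R}(\mathbb{D})\cap\mathbb{R}$ the largest disc $\{w:|w-a|<r\}$ contained in $\varphi_{R}(\mathbb{D})$ has radius exactly $r_{a}=\min_{\theta}|\varphi_{R}(e^{i\theta})-a|$. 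A direct evaluation gives the two extreme real points $\varphi_{R}(1)=2$ and $\varphi_{R}(-1)=2(\sqrt{2}-1)$, so the entire content of the lemma is that this minimum is attained at whichever of these two points is nearer to $a$.

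Second, I would turn the minimisation into a one-variable problem. Setting $z=e^{i\theta}$ and $c=\cos\theta\in[-1,1]$, both $|\varphi_{R}(e^{i\theta})|^{2}$ and $\RE\varphi_{R}(e^{i\theta})$ are rational in $c$; substituting into
\[
D(\theta):=|\varphi_{R}(e^{i\theta})-a|^{2}=|\varphi_{R}|^{2}-2a\,\RE\varphi_{R}+a^{2}
\]
and clearing denominators via $|k-e^{i\theta}|^{2}=k^{2}-2kc+1=2(k+1)-2kc$, I expect $D$ to collapse, using $k^{2}=2k+1$, to a ratio $D(c)=P(c)/R(c)$ with $R(c)=k(k+1)-k^{2}c>0$ on $[-1,1]$ and $P$ quadratic. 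One checks directly that $D(1)=(2-a)^{2}$ and $D(-1)=(a-2(\sqrt{2}-1))^{2}$, matching the two candidate squared radii.

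The heart of the matter is to prove that the minimum of $D$ over $[-1,1]$ is taken at an endpoint. Instead of locating the critical points of $D$, I would examine the differences $D(c)-D(1)$ and $D(c)-D(-1)$: each equals $P(c)$ minus a constant times $R(c)$, hence a quadratic in $c$ already vanishing at $c=1$ (respectively $c=-1$). Factoring out $(c-1)$ (respectively $(c+1)$) leaves one remaining root, which for $D(c)-D(1)$ simplifies to
\[
c_{+}=\frac{a(k+2)-k^{2}}{1-a}.
\]
The identity $2k/(k+1)=\sqrt{2}$ then yields the clean equivalence $c_{+}\le -1\iff a\ge\sqrt{2}$; combined with $R>0$ and the sign of the leading coefficient $2k(1-a)$ this forces $D(c)\ge D(1)$ on all of $[-1,1]$ exactly when $a\ge\sqrt{2}$, giving $r_{a}=2-a$. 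The symmetric computation with $D(c)-D(-1)$ yields $r_{a}=a-2(\sqrt{2}-1)$ when $a\le\sqrt{2}$, with the two extreme points equidistant at the crossover $a=\sqrt{2}$. Sharpness is then immediate, since the extremal disc touches $\partial\varphi_{R}(\mathbb{D})$ at the relevant real extreme point.

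I expect the delicate point to be exactly this endpoint analysis: one must ensure that the lone interior critical point of $D$ never drops below the endpoint value, and that the vanishing of the leading coefficient $2k(1-a)$ at $a=1$ (where $D$ degenerates to a monotone M\"obius function of $c$) is distinct from, and does not interfere with, the geometric threshold $a=\sqrt{2}$. The identities $k^{2}=2k+1$ and $k+1=\sqrt{2}\,k$ are precisely what place that threshold at $\sqrt{2}$ and keep the factorisation transparent.
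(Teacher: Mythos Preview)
The paper does not prove Lemma~\ref{lem6.1}; it is quoted verbatim from \cite[Lemma~2.2]{KumarRavi} and used as a black box in the proof of Theorem~\ref{thm6.2}. So there is no ``paper's own proof'' to compare against.

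That said, your outline is a sound direct proof and matches the way such disc-inclusion lemmas are typically established in the source literature: parametrise the boundary $\varphi_{R}(e^{i\theta})$, reduce the squared distance $D$ to a rational function of $c=\cos\theta$, and show the minimum of $D$ over $[-1,1]$ occurs at the nearer real extreme point $\varphi_{R}(1)=2$ or $\varphi_{R}(-1)=2(\sqrt{2}-1)$. Your factorisation trick for $D(c)-D(\pm 1)$ is the efficient way to avoid solving $D'(c)=0$ directly, and the identities $k^{2}=2k+1$, $k+1=\sqrt{2}\,k$ do land the threshold at $a=\sqrt{2}$; your claimed root $c_{+}=\dfrac{a(k+2)-k^{2}}{1-a}$ indeed equals $-1$ at $a=\sqrt{2}$. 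Two small points to tighten: first, ``Ma--Minda univalent'' gives univalence on $\mathbb{D}$, but to conclude $\varphi_{R}(\mathbb{D})$ is a Jordan domain with boundary exactly $\varphi_{R}(\partial\mathbb{D})$ you should note that $\varphi_{R}$ is analytic on $\overline{\mathbb{D}}$ (its only pole is at $z=k>1$) and remains injective there; second, in the sign analysis of $D(c)-D(1)$ for $\sqrt{2}\le a<2$ you should record explicitly that the leading coefficient is negative (since $a>1$) and the two roots $c_{+}\le -1$ and $1$ bracket $[-1,1]$, forcing the numerator nonnegative on $[-1,1]$---you allude to this but the case $a=1$ (degenerate leading coefficient) sits in the \emph{other} range $a\le\sqrt{2}$, so it is the analysis of $D(c)-D(-1)$ that must handle it. None of this affects the correctness of the plan.
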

	
	\begin{theorem}\label{thm6.2}
		If the function $f\in \mathcal{ST}(\alpha)$, then the $\mathcal{S}^{*}_{R}$ radius for the function $F$ defined in \eqref{eqn2.1} is given by
		\begin{align*}
			R_{\mathcal{S}^{*}_{R}}(F)&= \begin{dcases}
				\sigma_{0} &\text{ if }\ 2\alpha+\beta-2\geqslant 0\\
				\sigma_{0} &\text{ if }\ 2\alpha+\beta-2<0\ \text{and}\ X(\alpha,\beta)\leqslant 0\\
				\tilde{\sigma_{0}}&\text{ if }\ 2\alpha+\beta-2<0\ \text{and} X(\alpha,\beta)> 0,
			\end{dcases}
		\end{align*}
		where
		\begin{align}
			\sigma_{0}=&\nonumber\dfrac{2(3-2\sqrt{2})}{(2-2\alpha+\beta)+\sqrt{(-2+2\alpha-\beta)^2-4(3-2\sqrt{2})(2\sqrt{2}-1-2\alpha-\beta)}},\\
			\tilde{\sigma_{0}} =&\nonumber\dfrac{2}{(2-2\alpha+\beta)+\sqrt{(-2+2\alpha-\beta)^2-4(2\alpha-3+\beta)}}
			\intertext{and}
			X(\alpha,\beta) =& \label{eqn6.1}2(2\alpha-2-\beta)(1+\sqrt{2}-2\alpha-\beta)((2\alpha-2-\beta)\\
			&\quad \nonumber+\sqrt{(2\alpha-2-\beta)^2-4(3-2\sqrt{2})(2\sqrt{2}-1-2\alpha-\beta)}) \\
			&\quad \nonumber
			+4(1-2\sqrt{2}+2\alpha+\beta) ((3-2\sqrt{2})(1+\sqrt{2}-2\alpha-\beta) \\
			&\quad \nonumber +(1-\sqrt{2})(1-2\sqrt{2}+2\alpha+\beta)).
		\end{align}
	\end{theorem}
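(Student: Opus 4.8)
The plan is to run the same machinery as in Theorems~\ref{thm4.2} and~\ref{thm5.2}, now with Lemma~\ref{lem6.1} supplying the inscribed discs of $\varphi_{R}(\mathbb{D})$ and with the disc $\mathbb{D}(a_{F}(r);c_{F}(r))$ from~\eqref{eqn2.7} as the image of $zF'(z)/F(z)$. Everything reduces to finding the largest $r$ for which $\mathbb{D}(a_{F}(r);c_{F}(r))\subset\varphi_{R}(\mathbb{D})$. According to Lemma~\ref{lem6.1} the transition point is $\sqrt{2}$, with left endpoint $2(\sqrt{2}-1)$ and right endpoint $2$, so I would form the two boundary-touching equations $c_{F}(r)=a_{F}(r)-2(\sqrt{2}-1)$ and $c_{F}(r)=2-a_{F}(r)$. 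Clearing the common denominator $1-r^{2}$ turns these into $\phi(r)=0$ and $\psi(r)=0$, where
\[
\phi(r)=(2\sqrt{2}-1-2\alpha-\beta)r^{2}-(2-2\alpha+\beta)r+(3-2\sqrt{2}),\qquad
\psi(r)=(2\alpha-3+\beta)r^{2}-(2-2\alpha+\beta)r+1.
\]
A sign check gives $\phi(0)=3-2\sqrt{2}>0$, $\phi(1)=-2\beta<0$ and $\psi(0)=1>0$, $\psi(1)=4(\alpha-1)<0$, so each polynomial has a root in $(0,1)$; rationalising the quadratic formula identifies these roots with the stated $\sigma_{0}$ and $\tilde{\sigma_{0}}$.

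The case split is governed, exactly as before, by the monotonicity of the center $a_{F}$ recorded after~\eqref{eqn2.7}. When $2\alpha+\beta-2\ge0$ the center decreases from $a_{F}(0)=1$, so on $(0,\sigma_{0})$ one has $a_{F}(r)<1<\sqrt{2}$, while $c_{F}>0$ together with the defining relation of $\sigma_{0}$ forces $a_{F}(r)>a_{F}(\sigma_{0})>2(\sqrt{2}-1)$; thus $2(\sqrt{2}-1)<a_{F}(r)<\sqrt{2}$ and the first branch of Lemma~\ref{lem6.1} gives $\mathbb{D}(a_{F}(\sigma_{0});c_{F}(\sigma_{0}))\subset\varphi_{R}(\mathbb{D})$, so the radius is at least $\sigma_{0}$. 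When $2\alpha+\beta-2<0$ the center increases from $1$ and crosses $\sqrt{2}$ at the root
\[
\tilde{\sigma_{1}}=\sqrt{\frac{\sqrt{2}-1}{\sqrt{2}+1-2\alpha-\beta}}
\]
of $\zeta(r):=(\sqrt{2}+1-2\alpha-\beta)r^{2}-(\sqrt{2}-1)$, obtained from $a_{F}(r)=\sqrt{2}$ and satisfying $\zeta(0)<0$, $\zeta(1)=2-2\alpha-\beta>0$. Because the center is increasing, $a_{F}(\sigma_{0})\le\sqrt{2}$ is equivalent to $\sigma_{0}\le\tilde{\sigma_{1}}$ and hence to $\zeta(\sigma_{0})\le0$; in that event the left branch of Lemma~\ref{lem6.1} is still binding at $\sigma_{0}$ and the radius is $\sigma_{0}$ (Case~(ii)), whereas if $\zeta(\sigma_{0})>0$ the center has already entered the right branch, the right-boundary constraint binds first, and the radius is $\tilde{\sigma_{0}}$ (Case~(iii)).

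The main obstacle is the purely algebraic verification that this dichotomy is precisely the sign condition on $X(\alpha,\beta)$ recorded in~\eqref{eqn6.1}. Mirroring the cardioid computation that culminates in~\eqref{eqn5.4a}, I would substitute $\sigma_{0}=2(3-2\sqrt{2})/D$, with $D=(2-2\alpha+\beta)+\sqrt{(2-2\alpha+\beta)^{2}-4(3-2\sqrt{2})(2\sqrt{2}-1-2\alpha-\beta)}$, into $\zeta$ and simplify. The surds in $\sqrt{2}$ collapse because $2(\sqrt{2}-1)=\varphi_{R}(-1)$ and $3-2\sqrt{2}=1-\varphi_{R}(-1)$, and after clearing $D$ one is left with $\zeta(\sigma_{0})$ equal to $X(\alpha,\beta)$ up to a strictly positive factor, so that $\zeta(\sigma_{0})\le0\iff X(\alpha,\beta)\le0$. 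This is the only genuinely laborious bookkeeping in the argument.

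Finally, sharpness is checked with the standard extremal pair $f(z)=z(1-z)^{2\alpha-2}\in\mathcal{ST}(\alpha)$ and $Q(z)=(1+z)^{n}$, for which $zF'(z)/F(z)$ is the rational expression computed in the proof of Theorem~\ref{thm2.1}. Evaluating at $z=-\sigma_{0}$ and using $\phi(\sigma_{0})=0$ gives $zF'(z)/F(z)=2(\sqrt{2}-1)=\varphi_{R}(-1)$, while evaluating at $z=\tilde{\sigma_{0}}$ and using $\psi(\tilde{\sigma_{0}})=0$ gives $zF'(z)/F(z)=2=\varphi_{R}(1)$. Since these are the two real boundary endpoints of $\varphi_{R}(\mathbb{D})$, no larger radius is admissible in either regime, and the stated value of $R_{\mathcal{S}^{*}_{R}}(F)$ is sharp.
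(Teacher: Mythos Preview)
Your proposal is correct and follows essentially the same approach as the paper: you invoke Lemma~\ref{lem6.1} against the disc~\eqref{eqn2.7}, form the same polynomials $\phi$, $\psi$, and $\zeta$, perform the same case split on the sign of $2\alpha+\beta-2$, identify the condition $\zeta(\sigma_{0})\le0$ with $X(\alpha,\beta)\le0$ via the explicit computation (the paper records this as $\zeta(\sigma_{0})=X(\alpha,\beta)/4(2\sqrt{2}-1-2\alpha-\beta)^{2}$), and verify sharpness with the identical extremal pair $f(z)=z(1-z)^{2\alpha-2}$, $Q(z)=(1+z)^{n}$ at $z=-\sigma_{0}$ and $z=\tilde{\sigma_{0}}$.
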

	\begin{proof}
		We show that the disc mentioned in \eqref{eqn2.7} satisfies $\mathbb{D}(a_F (r);c_F(r)) \subset \varphi_{R}(\mathbb{D})$ for all $0<r\leq R_{\mathcal{S}^{*}_{R}}(F)$. Lemma~\ref{lem6.1} gives that the two possible values of the radius are the smallest positive roots of the equations $c_{F}(r)=a_{F}(r)-2(\sqrt{2}-1)$ and $c_{F}(r)=2-a_{F}(r)$; which are equivalent to $\phi(r)=0$ and $\psi(r)=0$ respectively, where the polynomials in $r$ are of the form
		\begin{align}
			\phi(r):&=(2(\sqrt{2}-1)-(2\alpha-1+\beta))r^2-(2-2\alpha+\beta)r+3-2\sqrt{2},\label{eqn6.2}\intertext{and}
			\psi(r):&=(2\alpha-3+\beta)r^2-(2-2\alpha+\beta)r+1\label{eqn6.3}
		\end{align}
		respectively. The fact that both the polynomials $\phi$ and $\psi$ possess zeros in the interval $(0,1)$ can be easily verified as $\phi(0)=3-2\sqrt{2}>0 \ \text{and}\ \phi(1)=-2\beta <0$, also, $\psi(0)=1>0,\  \psi(1)=4(\alpha-1)<0$. The respective positive zeros of $\phi$ and $\psi$, denoted by $\sigma_{0}$ and $\tilde{\sigma_{0}}$, are given by
		\begin{align*}
			\sigma_{0}:=&\dfrac{2(3-2\sqrt{2})}{(2-2\alpha+\beta)+\sqrt{(-2+2\alpha-\beta)^2-4(3-2\sqrt{2})(2\sqrt{2}-1-2\alpha-\beta)}},\intertext{and}
			\tilde{\sigma_{0}} :=&\dfrac{2}{(2-2\alpha+\beta)+\sqrt{(-2+2\alpha-\beta)^2-4(2\alpha-3+\beta)}}.
		\end{align*}
		
		Case (i): $2\alpha+\beta-2 \geqslant 0$. By using the fact that the center $a_{F}(r)$ is decreasing function of $r$, it will be proved that
		\begin{equation}\label{eqn6.4}
			2(\sqrt{2}-1)<a_{F}(r)\leq \sqrt{2},\ r\in (0,\sigma_{0}).
		\end{equation}
		This after the application of Lemma~\ref{eqn6.1} will directly imply$\mathbb{D}(a_F (\sigma_{0});c_F(\sigma_{0})) \subset \varphi_{R}(\mathbb{D})$, that is, the required radius is atleast $\sigma_{0}$. So, to prove \eqref{eqn6.4}, observe that $ a_{F}(r)>a_{F}(\sigma_{0})\  \text{for}\  r\in (0,\sigma_0)$, and $\sigma_{0}$ is the positive root of the equation $c_{F}(r)=a_{F}(r)-2(\sqrt{2}-1)$, also since the radius $c_{F}(r)$ in \eqref{eqn2.7} is positive for all $r\in (0,1)$, $a_{F}(\sigma_{0})-2(\sqrt{2}-1)>0$. Lastly, $a_{F}(r)<a_{F}(0)=1<\sqrt{2},\ r\in (0,\sigma_0)$, thus proving \eqref{eqn6.4}, and also the required result for this case.
		
		Case (ii): $2\alpha+\beta-2<0\ \text{and}\ X(\alpha,\beta)\leqslant 0$. Here, again from the Lemma~\ref{lem6.1}, consider the equation $a_{F}(r)=\sqrt{2}$, which gets simplified to $\zeta(r)=0$, with $\zeta(r)=(\sqrt{2}+1-2\alpha-\beta)r^2+1-\sqrt{2}$. Then, for the polynomial $\zeta$, the positive root is denoted by $\tilde{\sigma_{1}}$, where
		\begin{equation*}
			\tilde{\sigma_{1}}=\sqrt{\dfrac{\sqrt{2}-1}{\sqrt{2}+1-2\alpha-\beta}}.
		\end{equation*}
		It is observed that $\zeta(1)=-(2\alpha+\beta -2)>0$ and $\zeta(0)=1-\sqrt{2}<0$, justifying the fact that $\tilde{\sigma_{1}} \in (0,1)$. This also gives that $\sigma_{0} \leqslant \tilde{\sigma_{1}}$ if and only if $\zeta(\sigma_{0})\leqslant 0$. It is seen that
		\begin{align}
			\zeta(\sigma_{0})=&\label{eqn6.4a}\dfrac{1}{4(2\sqrt{2}-1-2\alpha-\beta)^2}\big( 2(2\alpha-2-\beta)(1+\sqrt{2}-2\alpha-\beta)((2\alpha-2-\beta)\\
			&\quad \nonumber+\sqrt{(2\alpha-2-\beta)^2-4(3-2\sqrt{2})(2\sqrt{2}-1-2\alpha-\beta)}) \\
			&\quad \nonumber
			+4(1-2\sqrt{2}+2\alpha+\beta) ((3-2\sqrt{2})(1+\sqrt{2}-2\alpha-\beta) \\
			&\quad \nonumber +(1-\sqrt{2})(1-2\sqrt{2}+2\alpha+\beta))\big),
		\end{align}
		so, combining \eqref{eqn6.1} and \eqref{eqn6.4a}, $X(\alpha,\beta)\leqslant 0$ is same as saying $\zeta(\sigma_{0})\leqslant 0$. Thus, in this case, $\sigma_{0} \leqslant \tilde{\sigma_{1}}$ which implies $a_{F}(\sigma_{0})\leqslant a_{F}(\tilde{\sigma_{1}})=\sqrt{2}$, and now, Lemma~\ref{lem6.1} gives that the radius of starlikeness associated with the rational function for the function $F$ is atleast $\sigma_{0}$.
		
		Case (iii):  $2\alpha+\beta-2<0\ \text{and}\ X(\alpha,\beta)> 0$. Following similar arguments as in Case (ii), $\zeta(\sigma_{0})>0$ gives $a_{F}(\sigma_{0})> \sqrt{2}$, and then Lemma~\ref{lem6.1} implies that the required radius is atleast $\tilde{\sigma_{0}}$.

		Take $f(z)=z/(1-z)^{-2\alpha+2}\in \mathcal{ST}(\alpha)$ and the polynomial $Q$ as $Q(z)=(1+z)^{n}$. Using these choices for the function $f$ and the polynomial $Q$, the expression for $zF'(z)/F(z)$ as in the proof for Theorem~\ref{thm2.1}, becomes,
		\begin{align}
			\dfrac{zF'(z)}{F(z)}=& \dfrac{(1-2\alpha-\beta)z^2+(2-2\alpha+\beta)z+1}{1-z^2}.\label{eqn6.5}
		\end{align}
		The polynomials $\phi$ and $\psi$ in \eqref{eqn6.2}, \eqref{eqn6.3} for $r=\sigma_{0}$ and $r=\tilde{\sigma_{0}}$  respectively imply that
		\begin{align}
			(1-2\alpha-\beta)r^2-(2-2\alpha+\beta)r+1 &=2(\sqrt{2}-1)(1-r^2), \label{eqn6.6}\intertext{and}
			(1-2\alpha-\beta)r^2 +(2-2\alpha+\beta)r+1 &=2(1-r^2).\label{eqn6.7}
		\end{align}
		Now, observe that using \eqref{eqn6.6} and putting $z=-\sigma_{0}$ in \eqref{eqn6.5}, it is obtained that \[
		\dfrac{(-\sigma_{0})F'(-\sigma_{0})}{F(-\sigma_{0})}= 2(\sqrt{2}-1)=\varphi_{R}(-1)\]
		this proves the sharpness for the radius $\sigma_{0}$. Also, considering \eqref{eqn6.7}, and replacing $z=\tilde{\sigma_{0}}$ in \eqref{eqn6.5}, it is seen that $zF'(z)/F(z)$ assumes the value $2=\varphi_{R}(1)$ thus proving the sharpness for $\tilde{\sigma_{0}}$.
	\end{proof}
	
	When $\alpha$ goes to $1$ in Theorem~\ref{thm6.2} we get that the radius of starlikeness associated with a rational function for the function $F(z)=z(Q(z))^{\beta/n}$ where $Q$ is a non-constant polynomial of degree $n$ non-vanishing on the unit disc and $\beta >0$, comes out to be $(3-2\sqrt{2})/(3-2\sqrt{2}+\beta)$. 	Further, when $\beta\to 0$,  we obtain the radius of starlikeness associated with a rational function for the class of starlike functions of order $\alpha, 0\leqslant \alpha <1$ obtained by   Kumar and Ravichandran in \cite[Theorem~3.2]{KumarRavi} (when $A=1-2\alpha\ \text{and}\  B=-1$).
	
	\section{Starlike functions associated with a nephroid domain}
	Wani and Swaminathan \cite{LatSwami} in 2020 studied the class of starlike functions associated with a nephroid domain $\mathcal{S}^{*}_{Ne}=\mathcal{S}^{*}(\varphi_{Ne})$ with $\varphi_{Ne}(z)=1+z-z^3/3$. The function $\varphi_{Ne}$ maps the unit disc onto  the interior of the nephroid, a 2-cusped curve, \[\left((u-1)^2+v^2-\dfrac{4}{9}\right)^3-\dfrac{4v^2}{3}=0.\]
	In this section, the radius of starlikeness associated with the nephroid is discussed for the function $F$ defined in \eqref{eqn2.1}, using the following lemma due to Wani and Swaminathan.
	\begin{lemma}\cite[lemma~2.2]{LatSwami2} \label{lem7.1}
		For $1/3<a<5/3,$
		\begin{align*}
			r_a &=
			\begin{dcases}
				a-\frac{1}{3} & \text{ if }\ \frac{1}{3}<a\leq 1\\
				\frac{5}{3}-a & \text{ if } \ 1\leq a<\frac{5}{3}.
			\end{dcases}
		\end{align*}
		Then $\{w: |w-a|<r_a\} \subset \Omega_{Ne}$, where $\Omega_{Ne}$ is the region bounded by the nephroid, that is \[\Omega_{Ne}:=\left\{\left((u-1)^2+v^2-\dfrac{4}{9}\right)^3-\dfrac{4v^2}{3}<0\right\}.\]
	\end{lemma}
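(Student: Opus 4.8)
The plan is to read the lemma as a distance-minimisation problem. Since $\varphi_{Ne}$ maps $\mathbb{D}$ onto $\Omega_{Ne}$ and extends continuously to the boundary, the bounding nephroid is the curve $\partial\Omega_{Ne}=\{\varphi_{Ne}(e^{\iota\theta}):\theta\in[0,2\pi]\}$, and the largest open disc centred at a real point $a\in(1/3,5/3)$ that fits inside $\Omega_{Ne}$ has radius equal to the Euclidean distance from $a$ to this curve. So $r_a=\min_{\theta}\lvert\varphi_{Ne}(e^{\iota\theta})-a\rvert$, and the whole lemma reduces to computing this minimum and showing it equals $a-1/3$ or $5/3-a$ according to the position of $a$. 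First I would record the two candidate nearest points: since $\varphi_{Ne}'(z)=1-z^2$ vanishes on the unit circle exactly at $z=\pm1$, the two cusps of the nephroid sit at $\varphi_{Ne}(1)=5/3$ and $\varphi_{Ne}(-1)=1/3$ on the real axis. Because $\varphi_{Ne}$ has real coefficients, the curve is symmetric about the real axis, so it suffices to minimise over $\theta\in[0,\pi]$.

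The key computational step is to simplify the squared distance. Writing $u(\theta)+\iota v(\theta)=\varphi_{Ne}(e^{\iota\theta})$ and applying the triple-angle identities, one finds the pleasant closed forms
\[
u(\theta)-1=2\cos\theta-\tfrac{4}{3}\cos^3\theta,\qquad v(\theta)=\tfrac{4}{3}\sin^3\theta .
\]
Substituting $c=\cos\theta$ and $b=1-a$, the squared distance $g(\theta)=(u-a)^2+v^2$ collapses (the degree-six and degree-four terms in $c$ cancel) to the cubic
\[
g(c)=-\tfrac{8b}{3}c^3-\tfrac{4}{3}c^2+4bc+b^2+\tfrac{16}{9},\qquad c\in[-1,1].
\]
A direct check gives $g(1)=(b+2/3)^2=(5/3-a)^2$ and $g(-1)=(b-2/3)^2=(a-1/3)^2$, matching the two cusp distances. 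So everything rests on showing that the minimum of this cubic over $[-1,1]$ is attained at an endpoint.

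For that I would analyse $g'(c)=-8bc^2-\tfrac{8}{3}c+4b$. For $b>0$ (that is $a<1$) this is a downward parabola, so $g$ is decreasing, then increasing, then decreasing, and its unique local minimiser is the smaller root $c_-=(-1-\sqrt{1+18b^2})/(6b)$. Because $\lvert b\rvert<2/3$ one checks that $c_-\le-1$ — after clearing denominators and squaring this is just $6b(3b-2)\le0$ — with equality only at $b=2/3$; hence the local minimum falls at or to the left of the endpoint $c=-1$, so $g$ has no interior minimiser on $[-1,1]$ and attains its minimum there at an endpoint. Comparing $g(-1)=(a-1/3)^2$ with $g(1)=(5/3-a)^2$ and using $a<1\Rightarrow a-1/3<5/3-a$, the minimum is $(a-1/3)^2$, so $r_a=a-1/3$. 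The reflection $a\mapsto2-a$ (equivalently $b\mapsto-b$, $c\mapsto-c$) interchanges the two cusps and yields $r_a=5/3-a$ for $a>1$, the two formulas agreeing at $r_1=2/3$. Since the distance from $a$ to $\partial\Omega_{Ne}$ is exactly $r_a$, the open disc $\{w:\lvert w-a\rvert<r_a\}$ lies inside $\Omega_{Ne}$, which is the claim.

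The main obstacle is certifying that no interior point of the boundary arc is closer to $a$ than the relevant cusp: geometrically this is plausible because the cusps are the extreme real points of the region, but it genuinely needs the cubic reduction above, since a naive bound on $v$ or on $u$ alone cannot exclude a nearer off-axis point. Once the squared distance is recognised as a cubic in $\cos\theta$ whose only interior critical point is a maximum, the location of the minimiser is immediate; the one piece of care is the elementary inequality $c_-\le-1$ on $0<b<2/3$, which pins the genuine minimum to the endpoints and thereby to the two cusps.
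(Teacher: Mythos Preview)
The paper does not prove this lemma at all; it is quoted verbatim from Wani and Swaminathan \cite[Lemma~2.2]{LatSwami2} and used as a black box in the proof of Theorem~\ref{thm7.2}. So there is no in-paper argument to compare your attempt against.

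That said, your proof is correct and is essentially the standard distance-to-boundary computation one would expect. The parametrisation $u(\theta)-1=2\cos\theta-\tfrac{4}{3}\cos^3\theta$, $v(\theta)=\tfrac{4}{3}\sin^3\theta$ is right, the reduction of the squared distance to the cubic $g(c)=-\tfrac{8b}{3}c^3-\tfrac{4}{3}c^2+4bc+b^2+\tfrac{16}{9}$ is verified by direct expansion, and the endpoint values $g(\pm1)=(b\pm\tfrac{2}{3})^2$ check out. Your critical-point analysis is also sound: for $b>0$ the local minimiser $c_-=\bigl(-1-\sqrt{1+18b^2}\bigr)/(6b)$ satisfies $c_-\le-1$ precisely when $6b(3b-2)\le0$, which holds on $0<b<2/3$, so the minimum of $g$ on $[-1,1]$ is at an endpoint; the comparison $g(-1)<g(1)$ for $a<1$ and the reflection $b\mapsto-b$ for $a>1$ then finish the job. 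Two small points worth making explicit: you are silently using that $a\in\Omega_{Ne}$ (immediate since $\varphi_{Ne}$ maps $(-1,1)$ monotonically onto $(1/3,5/3)$) and that $\varphi_{Ne}$ is univalent on $\mathbb{D}$ so that $\partial\Omega_{Ne}$ really is the image of the unit circle; both are stated in \cite{LatSwami} and assumed in the present paper.
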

	
	\begin{theorem}\label{thm7.2}
		If the function $f\in \mathcal{ST}(\alpha)$, then the radius of starlikeness associated with a nephroid domain for the function $F$ defined in \eqref{eqn2.1} is given by
		\begin{align*}
			R_{\mathcal{S}^{*}_{Ne}}(F)&= \begin{dcases}
				\sigma_{0} &\text{ if }\ 2\alpha+\beta-2\geqslant 0\\
				\tilde{\sigma_{0}} &\text{ if }\ 2\alpha+\beta-2<0,
			\end{dcases}
		\end{align*}
		where
		\begin{align*}
			\sigma_{0}&=\dfrac{4}{3(2-2\alpha+\beta)+\sqrt{9(-2+2\alpha-\beta)^2-8(4-6\alpha-3\beta)}},\intertext{and}
			\tilde{\sigma_{0}} &=\dfrac{4}{3(2-2\alpha+\beta)+\sqrt{9(-2+2\alpha-\beta)^2-8(6\alpha-8+3\beta)}}.
		\end{align*}
	\end{theorem}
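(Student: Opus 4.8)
The plan is to reuse the template of Theorems~\ref{thm4.2}, \ref{thm5.2} and \ref{thm6.2}: start from the disc $zF'(z)/F(z)\in\mathbb{D}(a_F(r);c_F(r))$ supplied by \eqref{eqn2.7} and determine when it sits inside $\Omega_{Ne}$ by invoking Lemma~\ref{lem7.1}. The feature that streamlines the nephroid case is that the center satisfies $a_F(0)=1$, which is exactly the point where the two branches of Lemma~\ref{lem7.1} meet. Since $a_F$ is monotone in $r$, the center stays on one fixed side of $1$ throughout each regime, so only one branch of the lemma is ever active. This is why only two cases occur and why no auxiliary quantity $X(\alpha,\beta)$ is needed, unlike in the three preceding theorems where the branch point of the relevant lemma did not coincide with $a_F(0)$.

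First I would translate the two branches of Lemma~\ref{lem7.1} into the boundary equations $c_F(r)=a_F(r)-\tfrac13$ and $c_F(r)=\tfrac53-a_F(r)$, and clear the denominator $1-r^2$ to obtain the quadratics
\[\phi(r)=(4-6\alpha-3\beta)r^2-3(2-2\alpha+\beta)r+2,\qquad \psi(r)=(6\alpha-8+3\beta)r^2-3(2-2\alpha+\beta)r+2.\]
Writing the positive root of each in the rationalised form $2c/(-b+\sqrt{b^2-4ac})$ reproduces the stated $\sigma_0$ and $\tilde{\sigma_0}$. A root in $(0,1)$ exists because $\phi(0)=\psi(0)=2>0$ while $\phi(1)=-6\beta<0$ and $\psi(1)=12(\alpha-1)<0$; moreover in each regime the leading coefficient is negative (for $\phi$ this uses $2\alpha+\beta\geq2$, for $\psi$ it uses $2\alpha+\beta<2$), which keeps the discriminants appearing in $\sigma_0$ and $\tilde{\sigma_0}$ positive and the positive root unique and real.

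Next I would carry out the containment in the two cases dictated by the sign of $2\alpha+\beta-2$. When $2\alpha+\beta-2\geq0$ the center $a_F$ decreases, so $a_F(r)<a_F(0)=1$ on $(0,\sigma_0)$; combining $c_F(\sigma_0)=a_F(\sigma_0)-\tfrac13$ with $c_F>0$ and monotonicity gives $a_F(r)>a_F(\sigma_0)>\tfrac13$, so $\tfrac13<a_F(r)\leq1$ and the first branch of Lemma~\ref{lem7.1} yields $\mathbb{D}(a_F(r);c_F(r))\subset\Omega_{Ne}$ for $r\in(0,\sigma_0)$. When $2\alpha+\beta-2<0$ the center increases past $1$, and $c_F(\tilde{\sigma_0})=\tfrac53-a_F(\tilde{\sigma_0})$ together with $c_F>0$ forces $1\leq a_F(r)<\tfrac53$ on $(0,\tilde{\sigma_0})$, so the second branch applies and gives radius $\tilde{\sigma_0}$.

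Sharpness I would establish with the usual extremal data $f(z)=z/(1-z)^{-2\alpha+2}\in\mathcal{ST}(\alpha)$ and $Q(z)=(1+z)^n$, for which $zF'(z)/F(z)=((1-2\alpha-\beta)z^2+(2-2\alpha+\beta)z+1)/(1-z^2)$ as computed in the proof of Theorem~\ref{thm2.1}. Using $\phi(\sigma_0)=0$ gives $(-\sigma_0)F'(-\sigma_0)/F(-\sigma_0)=\tfrac13=\varphi_{Ne}(-1)$, and using $\psi(\tilde{\sigma_0})=0$ gives $\tilde{\sigma_0}F'(\tilde{\sigma_0})/F(\tilde{\sigma_0})=\tfrac53=\varphi_{Ne}(1)$; these are precisely the two real boundary points of the nephroid, so neither radius can be increased. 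The only genuinely substantive point is the observation that $a_F(0)=1$ is the branch point of Lemma~\ref{lem7.1} and that monotonicity pins $a_F$ to one side of it — this is what collapses the analysis to two cases; everything else is routine quadratic bookkeeping.
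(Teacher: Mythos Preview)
Your proposal is correct and follows essentially the same approach as the paper's own proof: the same disc \eqref{eqn2.7}, the same quadratics $\phi$ and $\psi$, the same two-case split on the sign of $2\alpha+\beta-2$, and the same extremal pair $f(z)=z(1-z)^{2\alpha-2}$, $Q(z)=(1+z)^n$ for sharpness. Your explicit remark that $a_F(0)=1$ coincides with the branch point of Lemma~\ref{lem7.1} (and hence monotonicity pins $a_F$ to a single branch) is a clarifying observation the paper leaves implicit, but otherwise the arguments are identical.
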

	\begin{proof}
		The proof aims to show that the disc in \eqref{eqn2.7} satisfies the following condition:\[\mathbb{D}(a_F (r);c_F(r)) \subset \Omega_{Ne},\quad 0<r\leqslant R_{\mathcal{S}^{*}_{Ne}}(F).\]
		
		Let \[\sigma_{0}:=\dfrac{4}{3(2-2\alpha+\beta)+\sqrt{9(-2+2\alpha-\beta)^2-8(4-6\alpha-3\beta)}}.\]
		The number $\sigma_{0}$ is the positive solution to the equation $c_{F}(r)=a_{F}(r)-1/3$, which transforms into $\phi(r)=0$ with the polynomial $\phi$ in $r$ given by
		\begin{equation}\label{eqn7.1}
			\phi(r):=(4-6\alpha-3\beta)r^2-3(2-2\alpha+\beta)r+2.
		\end{equation}
		For the polynomial $\phi$, $\phi(0)=2>0$ and $\phi(1)=-6\beta <0$ justifying the existence of a zero for $\phi$ in the interval $(0,1)$.
		The number \[\tilde{\sigma_{0}}: =\dfrac{4}{3(2-2\alpha+\beta)+\sqrt{9(-2+2\alpha-\beta)^2-8(6\alpha-8+3\beta)}},\]
		is  the positive zero of the polynomial $\psi$ in $r$ is given by
		\begin{equation}\label{eqn7.2}
			\psi(r):=(6\alpha-8+3\beta)r^2-3(2-2\alpha+\beta)r+2.
		\end{equation}
		Here, the equation $\psi(r)=0$ is the the simplified form of the equation $c_{F}(r)=5/3-a_{F}(r)$. The polynomial $\psi$ indeed has a zero in the interval $(0,1)$ as $\psi(0)=2>0$ and $\psi(1)=12(\alpha-1)<0$.
		It is known that the center $a_{F}$ in \eqref{eqn2.7} has the property that $a_{F}(r)\leqslant 1$ for $2\alpha+\beta-2\geqslant 0$; and $a_{F}(r)>1$ for $2\alpha+\beta-2<0$.
		
		Case (i): $2\alpha+\beta-2\geqslant 0$. In this case, the center $a_{F}(r)\leqslant 1$, thus Lemma~\ref{lem7.1} implies that $\mathbb{D}(a_F (\sigma_{0});c_F(\sigma_{0})) \subset \Omega_{Ne}$. This proves that the $\mathcal{S}^{*}_{Ne}$ radius for the function $F$ is atleast $\sigma_{0}$.
		
		To verify the sharpness of $\sigma_{0}$, take $f(z)=z/(1-z)^{-2\alpha+2}\in \mathcal{ST}(\alpha)$ and the polynomial $Q$ as $Q(z)=(1+z)^{n}$. Using these choices for the function $f$ and the polynomial $Q$, the expression for $zF'(z)/F(z)$ as in the proof for Theorem~\ref{thm2.1} is,
		\begin{align}
			\dfrac{zF'(z)}{F(z)}&= \dfrac{(1-2\alpha-\beta)z^2+(2-2\alpha+\beta)z+1}{1-z^2}.\label{eqn7.3}
		\end{align}
		The polynomial $\phi$ in \eqref{eqn7.1} provides that for $r=\sigma_{0}$,
		\begin{equation}\label{eqn7.4}
			3((1-2\alpha-\beta)r^2-(2-2\alpha+\beta)r+1)=(1-r^2).
		\end{equation}
		Thus using \eqref{eqn7.4}, and replacing $z=-\sigma_{0}$, \eqref{eqn7.3} gives $((-\sigma_{0})F'(-\sigma_{0}))/F(-\sigma_{0})=1/3=\varphi_{Ne}(-1)$. This proves the sharpness for $\sigma_{0}$.
		
		Case (ii): $2\alpha+\beta -2<0$. Here, it is known that $a_{F}(r)>1$, so, Lemma~\ref{lem7.1} directly gives that the required radius is atleast the positive solution of the equation $c_{F}(r)=5/3-a_{F}(r)$ that is   $\tilde{\sigma_{0}}$.
		
		To verify sharpness in this case, take $f(z)=z/(1-z)^{-2\alpha+2}\in \mathcal{ST}(\alpha)$ and the polynomial $Q$ as $Q(z)=(1+z)^{n}$. These  expressions transform $zF'(z)/F(z)$ into the form given in \eqref{eqn7.3}. The polynomial $\psi$ given in \eqref{eqn7.2} implies that for $r=\tilde{\sigma_{0}}$,
		\begin{equation}\label{eqn7.5}
			3((1-2\alpha-\beta)r^2+(2-2\alpha+\beta)r+1)=5(1-r^2).
		\end{equation}
		Thus, the radius $\tilde{\sigma_{0}}$ is the best possible since an application of \eqref{eqn7.5} in \eqref{eqn7.3} gives that the expression for $zF'(z)/F(z)$ takes the value $5/3=\varphi_{Ne}(1)$ for $z=\tilde{\sigma_{0}}$. Thus, proving the sharpness for the radius $\tilde{\sigma_{0}}$.
	\end{proof}
	
	When $\alpha$ goes to $1$ in Theorem~\ref{thm7.2} we get:
	The radius of starlikeness associated with a nephroid domain for the function $F(z)=z(Q(z))^{\beta/n}$ where $Q$ is a non-constant polynomial of degree $n$ non-vanishing on the unit disc and $\beta >0$, comes out to be $2/(2+3\beta)$. 	
	When $\beta\to 0 $  in Theorem~\ref{thm7.2}, we obtain the radius of starlikeness associated with a nephroid domain for the class of starlike functions of order $\alpha, 0\leqslant \alpha <1$ obtained by  Wani\ and\  Swaminathan  \cite[Theorem~3.1(ii)]{LatSwami2}  when $A=1-2\alpha\ \text{and}\  B=-1$.
	
	\section{Starlike functions associated with modified sigmoid function}
	In 2020, Goel and Kumar \cite{PriyankaSivaSig} introduced the class $\mathcal{S}^{*}_{SG}$ of functions mapping $\mathbb{D}$ onto the domain $\Delta_{SG}=\{w: |\log w/(2-w)|< 1\}$, with $\mathcal{S}^{*}_{SG}=\mathcal{S}^{*}(2/(1+\mathit{e}^{-z}))$. They also proved the following lemma.
	\begin{lemma}\cite[lemma~2.2]{PriyankaSivaSig} \label{lem8.1}
		Let $2/(1+\mathit{e})<a<2\mathit{e}/(1+\mathit{e})$. If
		\[r_a=\dfrac{\mathit{e}-1}{\mathit{e}+1}-|a-1|,\]
		then $\{w: |w-a|<r_a\} \subset \Delta_{SG}$.
	\end{lemma}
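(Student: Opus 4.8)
The plan is to prove the equivalent statement that the Euclidean distance from the real centre $a$ to the boundary $\partial\Delta_{SG}$ is at least $r_a$; since both the disc $\{w:|w-a|<r_a\}$ and the region $\Delta_{SG}$ are symmetric about the real axis and $a\in\Delta_{SG}$, this distance condition is equivalent to the asserted inclusion. Writing $\varphi_{SG}(z):=2/(1+\mathit{e}^{-z})=1+\tanh(z/2)$, one has $\Delta_{SG}=\varphi_{SG}(\mathbb{D})$ (solving $w=\varphi_{SG}(z)$ gives $z=\log(w/(2-w))$, whence $|z|<1$ iff $w\in\Delta_{SG}$), so the boundary is parametrised by $w(\theta)=\varphi_{SG}(\mathit{e}^{i\theta})$, and the half-angle form of $\tanh$ gives
\[
\RE w(\theta)=1+\frac{\sinh(\cos\theta)}{\cosh(\cos\theta)+\cos(\sin\theta)},\qquad
\IM w(\theta)=\frac{\sin(\sin\theta)}{\cosh(\cos\theta)+\cos(\sin\theta)}.
\]
In particular the two real boundary points are $w(0)=\varphi_{SG}(1)=2\mathit{e}/(\mathit{e}+1)$ and $w(\pi)=\varphi_{SG}(-1)=2/(\mathit{e}+1)$, each lying at distance $(\mathit{e}-1)/(\mathit{e}+1)=\tanh(1/2)$ from the point $1$.

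First I would normalise the computation by setting $b:=a-1$, so the hypothesis reads $|b|<\tanh(1/2)$, and writing $h(\theta):=w(\theta)-1=\tanh(\mathit{e}^{i\theta}/2)$. Then
\[
|w(\theta)-a|^2=|h(\theta)|^2-2b\,\RE h(\theta)+b^2,\qquad
r_a^2=\bigl(\tanh(1/2)-|b|\bigr)^2=\tanh^2(1/2)-2|b|\tanh(1/2)+b^2,
\]
so the inclusion is equivalent to the single scalar inequality
\[
|h(\theta)|^2-2b\,\RE h(\theta)\ \geq\ \tanh^2(1/2)-2|b|\tanh(1/2),\qquad \theta\in[0,2\pi).
\]
Because $\RE h(0)=\tanh(1/2)$, $\RE h(\pi)=-\tanh(1/2)$ and $|h(0)|=|h(\pi)|=\tanh(1/2)$, equality holds at $\theta=0$ when $b\geq0$ and at $\theta=\pi$ when $b<0$; thus the whole lemma reduces to showing that the closest boundary point to $a$ is the nearer of the two real boundary points.

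The core of the proof is to verify this inequality for every $\theta$. I would exploit the symmetries $h(-\theta)=\overline{h(\theta)}$ and $h(\pi-\theta)=-\overline{h(\theta)}$, the first reducing the minimisation to $[0,\pi]$ and the second interchanging the roles of $b$ and $-b$, so that it suffices to treat $b\geq0$ and show that $D(\theta):=|w(\theta)-a|^2$ attains its minimum on $[0,\pi]$ at $\theta=0$. Substituting $p=\cos\theta$, $q=\sin\theta$ and $C(\theta)=\cosh p+\cos q>0$, the function becomes
\[
D(\theta)=\frac{\sinh^2 p+\sin^2 q}{C^2}-2b\,\frac{\sinh p}{C}+b^2,
\]
and one differentiates in $\theta$ (using $p'=-\sin\theta$, $q'=\cos\theta$) and studies the sign of $D'(\theta)$ on $(0,\pi)$ to locate the critical points.

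The main obstacle is precisely this critical-point analysis: since $D'(\theta)$ is a transcendental expression built from $\sinh(\cos\theta)$, $\cosh(\cos\theta)$, $\sin(\sin\theta)$ and $\cos(\sin\theta)$, one must argue carefully that it is negative throughout $(0,\pi)$, or vanishes only at an interior maximum of the distance, so that no interior boundary point lies closer to $a$ than $w(0)$. Once $\min_{\theta}D(\theta)=r_a^2$ is established, the open disc $\{w:|w-a|<r_a\}$ meets $\partial\Delta_{SG}$ in no point and is therefore contained in $\Delta_{SG}$, completing the argument.
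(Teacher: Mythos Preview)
The paper does not supply a proof of this lemma at all: it is quoted verbatim as \cite[Lemma~2.2]{PriyankaSivaSig} and used as a black box in the proof of Theorem~\ref{thm8.2}. There is therefore no ``paper's own proof'' to compare against; you are attempting an independent proof of a result the authors simply cite.

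As for your proposal itself, the reduction is sound --- the symmetry $h(\pi-\theta)=-\overline{h(\theta)}$ correctly reduces to $b\geq 0$, and the claim that the inclusion is equivalent to $\min_\theta |w(\theta)-a|^2\geq r_a^2$ is valid because $a\in\Delta_{SG}$ and $\Delta_{SG}$ is a Jordan domain. But you have not actually proved the lemma: you explicitly flag the sign analysis of $D'(\theta)$ as ``the main obstacle'' and then do not carry it out. Everything hinges on showing that no interior boundary point of $\partial\Delta_{SG}$ is closer to $a$ than the nearer real endpoint, and your write-up stops at the point where this must be verified. Since $D'(\theta)$ mixes $\sinh(\cos\theta)$, $\cosh(\cos\theta)$, $\sin(\sin\theta)$, $\cos(\sin\theta)$ nontrivially, this is not a routine step that can be waved through; without it the argument is a plan, not a proof. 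If you want to complete it along these lines, one workable route is to bound $|h(\theta)|$ and $\RE h(\theta)$ separately and show $|h(\theta)|^2-\tanh^2(1/2)\geq 2b\bigl(\RE h(\theta)-\tanh(1/2)\bigr)$ for all $\theta$ and all $0\leq b<\tanh(1/2)$, which reduces to a one-parameter inequality in $\theta$ after taking the worst admissible $b$.
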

	In the next result, we find the radius of starlikeness associated with modified sigmoid function for the function $F$ defined in \eqref{eqn2.1}.
	\begin{theorem}\label{thm8.2}
		If the function $f\in \mathcal{ST}(\alpha)$, then the $\mathcal{S}^{*}_{SG}$ radius for the function $F$ given in \eqref{eqn2.1} is given by
		\begin{align*}
			R_{\mathcal{S}^{*}_{SG}}(F)&= \begin{dcases}
				\sigma_{0} &\text{ if }\ 2\alpha+\beta-2\geqslant 0\\
				\tilde{\sigma_{0}} &\text{ if }\ 2\alpha+\beta-2<0,
			\end{dcases}
		\end{align*}
		where
		\begin{align*}
			\sigma_{0}&=2(\mathit{e}-1)\big((2-2\alpha+\beta)(\mathit{e}+1)  \\&\quad \nonumber+\sqrt{((\mathit{e}+1)(2-2\alpha+\beta))^2
				-4(\mathit{e}-1)(3+\mathit{e}-2\alpha-2\alpha\mathit{e}-\beta-\beta\mathit{e})}\big)^{-1},\intertext{and}
			\tilde{\sigma_{0}} &=2(\mathit{e}-1)\big((2-2\alpha+\beta)(\mathit{e}+1)\\
			&\quad \nonumber +\sqrt{((\mathit{e}+1)(2-2\alpha+\beta))^2
				+4(\mathit{e}-1)(1+3\mathit{e}-2\alpha-2\alpha\mathit{e}-\beta-\beta\mathit{e})}\big)^{-1}.
		\end{align*}
	\end{theorem}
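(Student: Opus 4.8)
The plan is to reuse the disc produced in Theorem~\ref{thm2.1}: since $zF'(z)/F(z)\in\mathbb{D}(a_F(r);c_F(r))$ for $|z|\le r$, with $a_F$ and $c_F$ as in \eqref{eqn2.7}, it suffices to find the largest $r$ for which this whole disc sits inside $\Delta_{SG}$. Lemma~\ref{lem8.1} then reduces the containment $\mathbb{D}(a_F(r);c_F(r))\subset\Delta_{SG}$ to the single scalar inequality $c_F(r)\le \frac{e-1}{e+1}-|a_F(r)-1|$, valid whenever $2/(1+e)<a_F(r)<2e/(1+e)$. The decisive structural observation is that the inradius $r_a$ in Lemma~\ref{lem8.1} peaks exactly at $a=1$, and $a_F(0)=1$. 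Recalling from Theorem~\ref{thm2.1} that $a_F$ is decreasing in $r$ when $2\alpha+\beta-2\ge 0$ and increasing when $2\alpha+\beta-2<0$, the sign of $a_F(r)-1$ is therefore constant on $(0,1)$ in each regime. This pins down which linear branch of $|a_F(r)-1|$ is active and explains why, unlike Theorems~\ref{thm4.2} and~\ref{thm5.2} whose kink sits away from $1$, only two cases occur here and no auxiliary quantity $X(\alpha,\beta)$ is needed.

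In Case~(i), where $2\alpha+\beta-2\ge 0$, the center satisfies $a_F(r)\le a_F(0)=1$, so $|a_F(r)-1|=1-a_F(r)$ and the containment condition becomes $a_F(r)-c_F(r)\ge 2/(1+e)$. Clearing denominators in $a_F(r)-c_F(r)=2/(1+e)$ turns this into a quadratic $\phi(r)=0$, and I expect its relevant root in $(0,1)$ to be exactly the stated $\sigma_0$; the endpoint values $\phi(0)=e-1>0$ and $\phi(1)=-2\beta(1+e)<0$ locate the root. Since $a_F(r)-c_F(r)$ is decreasing in $r$ (established in the proof of Theorem~\ref{thm2.1}) and exceeds $2/(1+e)$ at $r=0$, the inequality holds precisely for $r<\sigma_0$; positivity of $c_F$ then forces $a_F(r)>2/(1+e)$, while $a_F(r)\le 1<2e/(1+e)$, so the hypothesis of Lemma~\ref{lem8.1} is met throughout $(0,\sigma_0)$.

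Case~(ii), where $2\alpha+\beta-2<0$, is the mirror image: now $a_F(r)\ge 1$, so $|a_F(r)-1|=a_F(r)-1$ and the condition reads $a_F(r)+c_F(r)\le 2e/(1+e)$. Clearing denominators in $a_F(r)+c_F(r)=2e/(1+e)$ yields a second quadratic $\psi(r)=0$ whose root in $(0,1)$ should be $\tilde\sigma_0$, again located via the sign change $\psi(0)=e-1>0$, $\psi(1)=4(\alpha-1)(1+e)<0$. Here $a_F(r)+c_F(r)$ increases from $1$ (its derivative is a sum of two manifestly positive terms), so the inequality holds for $r<\tilde\sigma_0$, and $1\le a_F(r)\le a_F(r)+c_F(r)<2e/(1+e)$ together with $a_F(r)>2/(1+e)$ keeps us inside the admissible window. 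The hard part in either case is purely the bookkeeping that matches the smaller positive root of the quadratic with the rationalized expressions written for $\sigma_0$ and $\tilde\sigma_0$; this must be handled carefully because the constant term $e-1$ and the leading coefficient change sign with $\alpha,\beta$, so one should use the root in the form $2(e-1)/(B+\sqrt{B^2-4AC})$ rather than commit to a fixed branch of the square root.

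For sharpness I would take the extremal pair $f(z)=z/(1-z)^{-2\alpha+2}\in\mathcal{ST}(\alpha)$ and $Q(z)=(1+z)^n$ used throughout, for which $zF'(z)/F(z)=((1-2\alpha-\beta)z^2+(2-2\alpha+\beta)z+1)/(1-z^2)$. Evaluating at $z=-\sigma_0$ in Case~(i) makes this equal the leftmost point $a_F(\sigma_0)-c_F(\sigma_0)=2/(1+e)$, and evaluating at $z=\tilde\sigma_0$ in Case~(ii) makes it equal the rightmost point $a_F(\tilde\sigma_0)+c_F(\tilde\sigma_0)=2e/(1+e)$. Since $w/(2-w)$ equals $1/e$ at $w=2/(1+e)$ and equals $e$ at $w=2e/(1+e)$, in both cases $|\log(w/(2-w))|=1$, i.e.\ the value lands on $\partial\Delta_{SG}$, confirming that $\sigma_0$ and $\tilde\sigma_0$ cannot be enlarged.
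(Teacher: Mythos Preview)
Your proposal is correct and follows essentially the same approach as the paper: use the disc \eqref{eqn2.7}, invoke Lemma~\ref{lem8.1}, split into the two cases according to the sign of $2\alpha+\beta-2$ (which fixes the sign of $a_F(r)-1$), solve the resulting quadratics, and prove sharpness with the same extremal pair $f(z)=z(1-z)^{2\alpha-2}$, $Q(z)=(1+z)^n$ evaluated at $z=-\sigma_0$ and $z=\tilde\sigma_0$. If anything, your write-up is slightly more complete than the paper's: you supply the monotonicity of $a_F(r)\pm c_F(r)$ to justify that the containment holds on the whole interval up to the root, and you explicitly check that $a_F(r)$ stays within the admissible window $(2/(1+e),\,2e/(1+e))$ of Lemma~\ref{lem8.1}, points the paper leaves implicit.
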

	\begin{proof}
		We will show that $\mathbb{D}(a_F (r);c_F(r)) \subset \Delta_{SG}\ \text{for}\  0<r\leqslant R_{\mathcal{S}^{*}_{SG}}(F)$.
		
		Case (i): $2\alpha+\beta-2 \geqslant 0$. It is known that in this case, the center $a_{F}$ satisfies the inequality $a_{F}(r)\leqslant 1$. Consequently, the equation $c_{F}(r)= ((\mathit{e}-1)/(\mathit{e}+1))-|a_{F}(r)-1|$ becomes $c_{F}(r)=((\mathit{e}-1)/(\mathit{e}+1))-1+a_{F}(r)$. This equation is simplified into the form $\phi(r)=0$, with
		\begin{equation}\label{eqn8.1}
			\phi(r):= ((2-2\alpha-\beta)(\mathit{e}+1)-(\mathit{e}-1))r^2-(2-2\alpha+\beta)(\mathit{e}+1)r+(\mathit{e}-1).
		\end{equation}
		The polynomial $\phi$ has a zero in the interval $(0,1)$ since, $\phi(0)=(\mathit{e}-1)>0,\ \text{and}\ \phi(1)=-2\beta(\mathit{e}+1)<0$, and this positive number is 
		\begin{align*}
			\sigma_{0}&=2(\mathit{e}-1)\big((2-2\alpha+\beta)(\mathit{e}+1)  \\&\quad \nonumber+\sqrt{((\mathit{e}+1)(2-2\alpha+\beta))^2
				-4(\mathit{e}-1)(3+\mathit{e}-2\alpha-2\alpha\mathit{e}-\beta-\beta\mathit{e})}\big)^{-1}.
		\end{align*}
		Now, to verify the sharpness of the radius $\sigma_{0}$, we take $f(z)=z/(1-z)^{-2\alpha+2}\in \mathcal{ST}(\alpha)$ and polynomial $Q$ as $Q(z)=(1+z)^{n}$. Using these choices for the function $f$ and the polynomial $Q$, the expression for $zF'(z)/F(z)$ as seen in the proof for Theorem~\ref{thm2.1} is obtained to be
		\begin{align}
			\dfrac{zF'(z)}{F(z)}=&\label{eqn8.2} \dfrac{(1-2\alpha-\beta)z^2+(2-2\alpha+\beta)z+1}{1-z^2},
			\intertext{which implies that}
			2-\dfrac{zF'(z)}{F(z)} =&\label{eqn8.3} \dfrac{(-3+2\alpha+\beta)z^2-(2-2\alpha+\beta)z+1}{1-z^2}.
			\intertext{Thus, \eqref{eqn8.2} and \eqref{eqn8.3} give that}
			\left(\dfrac{zF'(z)}{F(z)}\right)\left( 2-\dfrac{zF'(z)}{F(z)}\right)^{-1} =&\label{eqn8.4}\dfrac{(1-2\alpha-\beta)z^2+(2-2\alpha+\beta)z+1}{(-3+2\alpha+\beta)z^2-(2-2\alpha+\beta)z+1}.
		\end{align}
		Further, the polynomial $\phi$ in \eqref{eqn8.1} gets reduced to
		\begin{equation}\label{eqn8.5}
			((1-2\alpha-\beta)r^2-(2-2\alpha+\beta)r+1)\mathit{e}=(-3+2\alpha+\beta)r^2+(2-2\alpha+\beta)r+1.
		\end{equation}
		for $r=\sigma_{0}$. Thus using \eqref{eqn8.5} in \eqref{eqn8.4} for $z=-\sigma_{0}$, it is seen that \[\left|\log\left( \left(\dfrac{zF'(z)}{F(z)}\right)\left( 2-\dfrac{zF'(z)}{F(z)}\right)^{-1}\right)\right|=\left|\log \dfrac{1}{\mathit{e}}\right|=1,\] thus proving the sharpness for $\sigma_{0}$.

		Case (ii): $2\alpha+\beta-2<0$. Here, the center $a_{F}(r)>1$, which implies that the equation $c_{F}(r)= ((\mathit{e}-1)/(\mathit{e}+1))-|a_{F}(r)-1|$ converts to $c_{F}(r)=((\mathit{e}-1)/(\mathit{e}+1))+1-a_{F}(r)$.  This is equivalent to $\psi(r)=0$ for
		\begin{equation}\label{eqn8.6}
			\psi(r):= ((2-2\alpha-\beta)(\mathit{e}+1)+(\mathit{e}-1))r^2+(2-2\alpha+\beta)(\mathit{e}+1)r-(\mathit{e}-1).
		\end{equation}
		The number
		\begin{align*}
			\tilde{\sigma_{0}} &=2(\mathit{e}-1)\big((2-2\alpha+\beta)(\mathit{e}+1)\\
			&\quad \nonumber+\sqrt{((\mathit{e}+1)(2-2\alpha+\beta))^2
				+4(\mathit{e}-1)(1+3\mathit{e}-2\alpha-2\alpha\mathit{e}-\beta-\beta\mathit{e})}\big)^{-1}
		\end{align*} 
		is the smallest positive zero of the polynomial $\psi$, and the observations $\psi(0)=-(\mathit{e}-1)<0,\ \text{and}\ \psi(1)=4(1-\alpha)(\mathit{e}+1)>0$ justify the existence of a zero in $(0,1)$.
		
		To verify that the radius $\tilde{\sigma_{0}}$ is the best possible, take the values of the function $f$ and the polynomial $Q$ same as in Case (i), thus the expression for $(zF'(z)/F(z))/(2-(zf'(z)/F(z)))$ is same as given in \eqref{eqn8.4}. The polynomial $\psi$ in \eqref{eqn8.6} gives
		\begin{equation}\label{eqn8.7}
			(1-2\alpha-\beta)r^2+(2-2\alpha+\beta)r+1=((-3+2\alpha+\beta)r^2-(2-2\alpha+\beta)r+1)\mathit{e}.
		\end{equation}
		for $r=\tilde{\sigma_{0}}$. Thus, putting $z=\tilde{\sigma_{0}}$ in \eqref{eqn8.4} and then using \eqref{eqn8.7}, it is obtained that
		\[\left|\log\left( \left( \dfrac{\tilde{\sigma_{0}}F'(\tilde{\sigma_{0}})}{F(\tilde{\sigma_{0}})}\right)
		\left(2-\dfrac{\tilde{\sigma_{0}}F'(\tilde{\sigma_{0}})}{F(\tilde{\sigma_{0}})}\right)^{-1}\right) \right|=\left|\log \mathit{e}\right|=1.\] This proves sharpness for $\tilde{\sigma_{0}}$.
	\end{proof}
	When $\alpha$ goes to $1$ in Theorem~\ref{thm8.2} we get that the radius of starlikeness associated with modified sigmoid function  for the function $F(z)=z(Q(z))^{\beta/n}$ where $Q$ is a non-constant polynomial of degree $n$ non-vanishing on the unit disc and $\beta >0$, is $(\mathit{e}-1)/(\mathit{e}-1+\beta(\mathit{e}+1))$. When $\beta\to 0$  in Theorem~\ref{thm8.2}, we obtain the radius of starlikeness associated with the modified sigmoid function for the class of starlike functions of order $\alpha$, $0\leqslant \alpha <1$. The radius of parabolic starlikeness and the radii of other related starlikeness including the one related to the lemniscate of Bernoulli can be investigated.

\end{document}